\documentclass[11pt]{amsart}

\usepackage[T1]{fontenc}
\usepackage[utf8]{inputenc}
\usepackage{mathpazo}
\usepackage{microtype}
\usepackage[margin=1.15in]{geometry}
\usepackage{amsmath,amssymb,mathtools}
\usepackage{xcolor}
\usepackage{tikz}
\usetikzlibrary{calc,decorations.pathreplacing}
\usepackage{booktabs,array,enumitem}
\usepackage[colorlinks,linkcolor=accent,citecolor=citec,urlcolor=accent]{hyperref}
\usepackage[capitalize,nameinlink]{cleveref}
\hypersetup{pdftitle={a-Numbers, Higher Cartier Ranks and Maximality of van der Geer--van der Vlugt Curves},
  pdfauthor={Gilberto B. Almeida Filho, Saeed Tafazolian, Stéfani C. Vieira}}

\definecolor{accent}{RGB}{24,70,120}
\definecolor{citec}{RGB}{30,110,80}
\definecolor{rc0}{RGB}{24,70,120}
\definecolor{rc1}{RGB}{46,110,168}
\definecolor{rc2}{RGB}{86,150,200}
\definecolor{rc3}{RGB}{214,120,40}
\definecolor{rc4}{RGB}{180,60,50}

\makeatletter
\def\section{\@startsection{section}{1}%
  \z@{1.5\linespacing\@plus\linespacing}{.7\linespacing}%
  {\normalfont\large\bfseries}}
\def\subsection{\@startsection{subsection}{2}%
  \z@{1.0\linespacing\@plus.7\linespacing}{.45\linespacing}%
  {\normalfont\bfseries}}
\makeatother

\newtheoremstyle{plainc}%
  {8pt}{8pt}{\itshape}{}
  {\bfseries}{.}{.5em}{}
\newtheoremstyle{defc}%
  {8pt}{8pt}{\normalfont}{}%
  {\bfseries}{.}{.5em}{}
\newtheoremstyle{remc}%
  {8pt}{8pt}{\normalfont}{}%
  {\itshape}{.}{.5em}{}

\theoremstyle{plainc}
\newtheorem{theorem}{Theorem}[section]
\newtheorem{lemma}[theorem]{Lemma}
\newtheorem{proposition}[theorem]{Proposition}
\newtheorem{corollary}[theorem]{Corollary}

\theoremstyle{defc}
\newtheorem{definition}[theorem]{Definition}
\newtheorem{example}[theorem]{Example}
\newtheorem{question}[theorem]{Question}

\theoremstyle{remc}
\newtheorem{remark}[theorem]{Remark}

\theoremstyle{plainc}
\newtheorem{mainthm}{Theorem}

\crefname{mainthm}{Theorem}{Theorems}
\crefname{remark}{Remark}{Remarks}
\crefname{question}{Question}{Questions}
\crefname{example}{Example}{Examples}



\newcommand{\Cart}{\mathcal{C}}
\newcommand{\cB}{\mathcal{B}}
\newcommand{\cU}{\mathcal{U}}
\newcommand{\F}{\mathbb{F}}
\newcommand{\Fbar}{\overline{\mathbb{F}}_p}
\newcommand{\Z}{\mathbb{Z}}
\newcommand{\Pone}{\mathbb{P}^1}
\newcommand{\Omg}{\Omega^1}
\newcommand{\bn}{\mathbf{n}}
\newcommand{\bt}{\mathbf{t}}
\newcommand{\bu}{\mathbf{u}}
\DeclareMathOperator{\rank}{rank}
\DeclareMathOperator{\Jac}{Jac}
\DeclareMathOperator{\Span}{Span}
\DeclareMathOperator{\Img}{Im}

\author{Gilberto B. Almeida Filho}
\address{Departamento de Ciências Exatas e da Terra, Universidade Federal de Mato Grosso, Várzea Grande, Mato Grosso, Brazil}
\email{gilberto.filho@ufmt.br}
\thanks{All three authors contributed equally to this work.}

\author{Saeed Tafazolian}
\address{IMECC, Universidade Estadual de Campinas, Campinas, São Paulo, Brazil}
\email{saeed@unicamp.br}

\author{Stéfani C. Vieira}
\address{Departamento de Ciências Exatas e da Terra, Universidade Federal de Mato Grosso, Várzea Grande, Mato Grosso, Brazil}
\email{stefani.vieira1@ufmt.br}

\begin{document}

\title[Cartier ranks and maximality of van der Geer--van der Vlugt curves]{$a$-Numbers, higher Cartier ranks and maximality\\
of van der Geer--van der Vlugt curves}

\subjclass[2020]{Primary 11G20, 14H40; Secondary 14G15, 14G17}
\keywords{Artin--Schreier curves, van der Geer--van der Vlugt curves, Cartier operator,
  $a$-number, Ekedahl--Oort type, maximal curves, minimal curves}

\begin{abstract}
Let $p$ be an odd prime and let $C_R$ be the van der Geer--van der Vlugt curve
$y^p-y=xR(x)$ over an algebraically closed field of characteristic $p$, where
$R(x)=\sum_{i=0}^{h}a_ix^{p^i}$ is an additive polynomial of degree $p^h$.
We determine the image and the rank of every power of the Cartier operator
$\Cart$ on $H^0(C_R,\Omg)$.
For $1\le r\le h-1$ the image of $\Cart^r$ is spanned by an explicit set of
monomial differentials that does not depend on $R$, and
$\rank(\Cart^r)=p^{h-r-1}\binom{p+r}{r+2}$.
The last iterate detects the linear coefficient:
$\rank(\Cart^h)=0$ if $a_0=0$ and $\rank(\Cart^h)=\binom{(p-1)/2+h}{h+1}$ if
$a_0\neq0$, so the nilpotency index of $\Cart$ is $h$ or $h+1$.
Hence, for $h\ge2$, the loci $a_0=0$ and $a_0\ne0$ have the same $a$-number, which is
the largest $a$-number of a one-point Artin--Schreier curve with pole order
$p^h+1$, but they share no Ekedahl--Oort type.
Over finite fields we deduce that a curve over $\F_{p^{2n}}$ that is geometrically
isomorphic to $C_R$ can be $\F_{p^{2n}}$-maximal or $\F_{p^{2n}}$-minimal only if
$n\ge h$, and only if $n\ge h+1$ when $a_0\neq0$.
Explicit examples show that the second bound is attained for
$(p,h)=(3,1)$, $(3,2)$ and $(5,2)$.
\end{abstract}

\maketitle

\section{Introduction}\label{sec-intro}

Let $C$ be a smooth projective curve of genus $g$ over an algebraically closed field
$k$ of characteristic $p>0$, and let $\Cart$ be the Cartier operator on
$H^0(C,\Omg)$.
The $a$-number $a(\Jac(C))=\dim_k\ker\Cart=g-\rank(\Cart)$ and the $p$-rank, which
is the stable rank of the powers of $\Cart$, are the most basic invariants of the
group scheme $\Jac(C)[p]$ \cite{Pries2008}.
More generally, the whole sequence $(\rank\Cart^r)_{r\ge1}$ is an invariant of the
Ekedahl--Oort type of $\Jac(C)[p]$ \cite{Oort2001,Moonen2001}; it carries more
information than the $a$-number, but in general it does not determine the
Ekedahl--Oort type.

Throughout the paper $p$ is odd.
For an integer $h\ge0$ let $C_R$ be the smooth projective curve with affine model
\begin{equation}\label{eq-family}
  C_R:\qquad y^p-y=xR(x),\qquad
  R(x)=\sum_{i=0}^{h}a_i x^{p^i},\qquad a_h\ne0,
\end{equation}
of genus $g(C_R)=p^h(p-1)/2$.
In characteristic $2$ these curves were introduced by van der Geer and van der Vlugt
in connection with Reed--Muller codes and the existence of supersingular curves
\cite{vdGvdV1992,vdGvdV1995}.
For odd $p$, Bouw, Ho, Malmskog, Scheidler, Srinivasan and Vincent
\cite{BHMSSV2016} studied their automorphisms, quotient curves, Jacobian
decompositions and zeta functions, and proved that $C_R$ is supersingular.
Takeuchi and Tsushima expressed the Frobenius eigenvalues of these curves through
Gauss sums \cite{TakeuchiTsushima2024}; building on this, Ito, Tatematsu and
Tsushima gave criteria for maximality and minimality over finite fields
\cite{ItoTatematsuTsushima2026}, partly recovering results of \"Ozbudak and
Sayg\i{} \cite{OzbudakSaygi2016}.
In characteristic $2$, Ito, Takeuchi and Tsushima determined the $L$-polynomials
\cite{ItoTakeuchiTsushima2026}.
Supersingularity fixes the Newton polygon of $C_R$, but it does not determine the
ranks of the powers of $\Cart$.

Explicit Cartier computations are available for several related curves.
Pries gave the $a$-number of the monomial curves $y^p-y=x^d$
\cite[Remark~3.4]{Pries2005}, Pries and Weir determined the Ekedahl--Oort types of
Hermitian curves \cite{PriesWeir2015}, Farnell and Pries found families of
Artin--Schreier curves with Cartier--Manin matrix of constant rank under the
condition $p\equiv1$ modulo the pole orders \cite{FarnellPries2013}, and Montanucci
and Speziali treated Fermat and Hurwitz curves \cite{MontanucciSpeziali2018}.
Garcia and Tafazolian used the Cartier operator to obtain restrictions on maximal
curves \cite{GarciaTafazolian2008}.
For general Artin--Schreier covers, Booher and Cais proved upper and lower bounds for
the $a$-number in terms of the ramification \cite{BooherCais2020}; the lower bound
is attained for $p=3,5$ by \cite{AbneyMcPeek2022}, and Cais and Ulmer showed that it
is the $a$-number of a general curve $y^p-y=f(x)$ with $\deg f$ prime to $p$
\cite{CaisUlmer2026}.
Groen extended the Booher--Cais bounds to the kernels of all powers of $\Cart$
\cite[Theorem~6.16]{Groen2024}. For one-point covers of $\Pone$, the proof of his
upper bound identifies the monomial equation $y^p-y=x^d$ as the extremal case.

\subsection*{Main results}
The aim of this paper is to determine the ranks of all powers of $\Cart$ uniformly
for every additive polynomial $R$ in \eqref{eq-family}, and to draw the arithmetic
consequences over finite fields.
Put $M=(p-1)/2$.
For $h\ge2$ and $1\le r\le h-1$ we define in \eqref{eq-Uhr} a subspace
$\cU_h^{(r)}\subseteq H^0(C_R,\Omg)$ spanned by monomial differentials
$x^iy^s\,dx$; it depends on $p$, $h$ and $r$, but not on the coefficients of $R$.

\begin{mainthm}\label{thm-A}
Let $h\ge2$ and $1\le r\le h-1$. Then $\Img(\Cart^r)=\cU_h^{(r)}$ and
\[
  \rank(\Cart^r)=p^{\,h-r-1}\binom{p+r}{r+2}.
\]
In particular
$\rank(\Cart)=p^{h-1}(p^2-1)/6$ and
$a(\Jac(C_R))=p^{h-1}(p-1)(2p-1)/6$, independently of $a_0,\dots,a_{h-1}$.
\end{mainthm}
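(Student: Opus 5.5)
The plan is to work in the standard monomial basis of $H^0(C_R,\Omg)$ and compute $\Cart$ explicitly by binomial expansion. Since $x$ has pole order $p$ and $y$ has pole order $p^h+1$ at the unique point at infinity, and $dx$ has divisor $(2g-2)\infty$, the differentials $x^iy^s\,dx$ with $ip+s(p^h+1)\le 2g-2$ are regular. They have distinct pole orders, so they form a basis; counting confirms there are $g=p^h(p-1)/2$ of them, and the range is $0\le s\le p-2$ with $i$ bounded in terms of $s$.

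To apply $\Cart$ I will use the relation $y=y^p-xR(x)$:
\[
y^s=\sum_{t=0}^{s}\binom{s}{t}(-1)^{s-t}y^{pt}\,(xR(x))^{s-t}.
\]
I then expand $(xR)^{s-t}=\bigl(\sum_j a_jx^{p^j+1}\bigr)^{s-t}$ by the multinomial theorem. By $p^{-1}$-linearity,
\[
\Cart(x^my^{pt}\,dx)=y^t\,\Cart(x^m\,dx),
\]
and $\Cart(x^m\,dx)$ equals $x^{(m+1)/p-1}\,dx$ when $p\mid m+1$ and $0$ otherwise. This gives a closed formula for $\Cart(x^iy^s\,dx)$ as a combination of monomials $x^{i'}y^{t}\,dx$ with $t\le s$. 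The coefficients are products of binomials, multinomials and $p$-th roots of monomials in the $a_j$. The exponent of $x$ is governed by $i+(s-t)+\sum_j e_j p^j$, where $(e_j)$ is the multinomial index with $\sum_j e_j=s-t$. The survival condition is then a congruence on $i+(s-t)+e_0$ modulo $p$, and the higher $e_j$ shift the output exponent by $e_jp^{j-1}$.

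Next I iterate this formula. The paper's $\cU_h^{(r)}$ from \eqref{eq-Uhr} should be the set of monomials $x^iy^s\,dx$ whose $x$-exponent satisfies a bound of the shape $i\lesssim p^{h-r}(\cdots)$ depending on $s$; this reflects that each application of $\Cart$ divides the $x$-degree by roughly $p$. The inclusion $\Img(\Cart^r)\subseteq\cU_h^{(r)}$ then goes by induction on $r$. It suffices to show that $\Cart$ maps $\cU_h^{(r-1)}$ (with $\cU_h^{(0)}=H^0$) into $\cU_h^{(r)}$ monomial by monomial, which is a degree estimate on every term of the expansion and does not depend on the $a_j$.

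For the reverse inclusion I plan a triangularity argument. Order the monomials by pole order, or by the pair $(t,i')$ lexicographically. For each target monomial $x^{i'}y^t\,dx\in\cU_h^{(r)}$ I will exhibit a source in $\cU_h^{(r-1)}$ whose image under $\Cart$ has that target as its leading term, with all other terms strictly smaller. The natural choice is the term with $t=s$ together with the top term $e_h$ of the multinomial, whose coefficient is a nonzero binomial times a power of $a_h^{1/p}$. Since $a_h\ne0$ is the only coefficient used, the conclusion is uniform in $a_0,\dots,a_{h-1}$. Combining this with the inclusion gives $\Cart(\cU_h^{(r-1)})=\cU_h^{(r)}$, and hence $\Img\Cart^r=\cU_h^{(r)}$ by induction. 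The rank formula is then a lattice-point count. Summing over $s$ the number of admissible $i$ should give $p^{h-r-1}$ times $\sum_s\binom{\cdot}{r+1}$-type terms, which telescope by the hockey-stick identity to $p^{h-r-1}\binom{p+r}{r+2}$. Specializing to $r=1$ gives $p^{h-1}\binom{p+1}{3}=p^{h-1}(p^2-1)p/6\cdot p^{-1}\cdots$, and then $a=g-\rank\Cart$.

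I expect the main obstacle to be the surjectivity step: checking that the chosen leading terms are genuinely distinct and that lower-order contributions, especially those involving $a_0$ through $e_0$, never land on a leading monomial of another source. This matters because $\cU_h^{(r)}$ is claimed independent of $R$, while the restriction $r\le h-1$ signals that the $a_0$-terms start to matter only at step $h$. I would first try a weight that assigns $x$ weight $p$ and $y$ weight $p^h+1$ and verify strict decrease directly, falling back to a two-level ordering (first $t$, then $i'$) if ties occur.
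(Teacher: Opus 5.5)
Your framework matches the paper's: the monomial basis, the expansion of $y^s$ via $y=y^p-xR(x)$, and induction on $r$ through $\Cart(\cU_h^{(r-1)})=\cU_h^{(r)}$, proved by a containment plus a triangular minor. The gap is in the containment. $\cU_h^{(r)}$ is not cut out by an upper bound on the $x$-exponent, so the containment cannot be a degree estimate on $i$ in terms of $s$. In \eqref{eq-Uhr} the condition is digitwise: write the $x$-exponent in base $p$. Its lowest $m=h-r-1$ digits $v$ are free, and the next $r+1$ digits $b,t_1,\dots,t_r$ (the last at $p^{h-1}$) must satisfy $b+t_1+\dots+t_r+s\le p-2$.

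Your heuristic that each application of $\Cart$ divides the $x$-degree by about $p$ fails. A term using $a_h$ adds $n_hp^{h-1}$ to the output exponent and refills the top digit. Thus $x^{p^{h-1}}\,dx\in\Img(\Cart^r)$ for every $r\le h-1$, whereas $x^{p^{h-1}-1}\,dx\notin\Img(\Cart^r)$, since its digit $b$ is $p-1$. So the image is not downward closed in $i$, even for $s=0$. Any span of monomials $x^iy^s\,dx$ with $i$ below an $s$-dependent bound either misses part of $\Img(\Cart^r)$ or has dimension strictly larger than $p^{h-r-1}\binom{p+r}{r+2}$. A degree estimate therefore cannot give the upper bound.

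What is needed instead is control of carries. Take $\omega$ as in \eqref{eq-Uhr} with $v=pv'+c$, and a surviving term with $c+n+n_0=ep-1$. Its output exponent is $v'+p^{m-1}b+\sum_\ell p^{m+\ell-1}t_\ell+(e-1)+\sum_{\nu\ge1}n_\nu p^{\nu-1}$. The low part $v'+(e-1)+\sum_{\nu=1}^{m-1}n_\nu p^{\nu-1}$ can carry $\kappa\in\{0,1\}$ into the $b$-digit. You must show $\kappa\le n_0+\sum_{\nu=1}^{m-1}n_\nu$: a carry needs some $n_\nu\ne0$ with $1\le\nu\le m-1$, or $e=2$, which forces $n_0\ge2$. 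Then the new digit sum plus the new $y$-exponent $\sigma=s-n$ stays at most $p-2$. This is exactly where $a_0,\dots,a_{h-1}$ could enlarge the image, and your plan has nothing in its place.

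Your lower-bound idea, by contrast, is sound once the correct targets are used. Measure $x^uy^\sigma\,dx$ by $pu+(p^h+1)\sigma$. Every surviving term of $\Cart(x^iy^s\,dx)$ with $i\equiv c\pmod p$ has weight at most $i+(p^h+1)s-(p-1)$. Equality holds only for the term in which all of $n=s-\sigma$ sits in the top index and $n=p-1-c$. So the $a_0$-terms you worry about automatically fall strictly below the diagonal.

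The source must be chosen correctly, though. For a target $x^{u_0+t^*p^{h-1}}y^\sigma\,dx$ with $u_0<p^{h-1}$, the source is $x^{pu_0+p-1-t^*}y^{\sigma+t^*}\,dx$: the diagonal term lowers the $y$-exponent by $t^*$ using only $a_h$. It is not ``the term with $t=s$'', which involves no $a_h$ at all. Checking that this source lies in $\cU_h^{(r-1)}$ again uses the digit description. Also, for $r=1$ the count is $p^{h-2}\binom{p+1}{3}$, not $p^{h-1}\binom{p+1}{3}$.
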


\begin{mainthm}\label{thm-B}
Let $h\ge1$. Then $\Cart^{h+1}=0$ on $H^0(C_R,\Omg)$ and
\[
  \rank(\Cart^h)=
  \begin{cases}
    0, & a_0=0,\\[2pt]
    \dbinom{M+h}{h+1}, & a_0\neq0.
  \end{cases}
\]
Consequently the nilpotency index of $\Cart$ is exactly $h$ if $a_0=0$ and exactly
$h+1$ if $a_0\neq0$.
For $h=1$ this gives $a(\Jac(C_R))=g$ if $a_0=0$ and
$a(\Jac(C_R))=(p-1)(3p-1)/8$ if $a_0\ne0$.
\end{mainthm}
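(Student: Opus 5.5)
We will work in the standard basis of $H^0(C_R,\Omg)$ consisting of the differentials $\omega_{i,s}=x^iy^s\,dx$ with $i,s\ge0$ and $ip+sp^{h}\cdot\!\ldots$; precisely, the pole order of $x^iy^s\,dx$ at the unique point at infinity must be small enough, which amounts to $(p^h+1)s+p\,i\le (p-1)(p^h+1)-p-1$ (the weights of $x,y$ being $p$ and $p^h+1$). The strategy is to compute $\Cart$ on these monomials via the identity $y=y^p-xR(x)$. Writing $y^s=(y^p-xR(x))^s$, expanding binomially, and using $\Cart(f^p\omega)=f\,\Cart(\omega)$ and $\Cart(x^{m}dx)=x^{(m+1)/p-1}dx$ if $p\mid m+1$ (and $0$ otherwise), we obtain
\[
\Cart(x^iy^s\,dx)=\sum_{j=0}^{s}\binom{s}{j}(-1)^{s-j}\,y^{j}\,\Cart\bigl(x^{i}(xR(x))^{s-j}dx\bigr).
\]
Expanding $(xR)^{s-j}$ as a multinomial in the monomials $a_\ell x^{p^\ell+1}$ gives an explicit formula. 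The key structural point is that the exponents $p^\ell+1$ are $\equiv1\pmod p$ for $\ell\ge1$ and $\equiv2$ for $\ell=0$, so a term survives $\Cart$ only if $i+1+(\text{number of factors})+(\text{number of }a_0\text{ factors})\equiv0\pmod p$. This is exactly where $a_0$ enters.

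Next we filter by a weight. We assign to $x^iy^s\,dx$ the degree $D=i+(p^h+1)s$ (up to normalization) and track how $\Cart$ lowers it by roughly a factor of $p$. For $\Cart^{h+1}=0$ we show that each application of $\Cart$ divides the ``$x$-part'' by $p$ while the $y$-exponent cannot grow. Concretely, we show by induction that $\Img(\Cart^r)$ is contained in the span of monomials with $i<p^{h-r}\cdot(\text{bounded})$. Hence after $h$ steps only differentials with $i$ small remain, essentially $x^iy^s\,dx$ with $i+s\le M-1$ in a suitable sense. One further step kills everything because the congruence condition forces a factor of $a_0$ and the degree bound then leaves no room. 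Presumably the earlier analysis yields this through the description $\Img(\Cart^{h-1})=\cU_h^{(h-1)}$ from \cref{thm-A} (for $h\ge2$), so we apply $\Cart$ twice more to $\cU_h^{(h-1)}$. The case $h=1$ is a direct computation.

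For the rank of $\Cart^h$ we compute $\Cart$ on $\cU_h^{(h-1)}$, or on all of $H^0$ if $h=1$. If $a_0=0$, then $xR(x)$ is a $p$-th power in the variable $x$ up to the factor $x$, namely $xR=\sum a_\ell x^{p^\ell+1}$ with $\ell\ge1$. The congruence condition then forces the surviving terms to land in monomials whose weight is too large, or to vanish. We verify that $\Cart(\cU_h^{(h-1)})=0$. If $a_0\ne0$, the leading contributions come from the terms using only $a_0x^2$. Modulo lower terms in a suitable order (lexicographic in $(s,i)$), the image is triangular with nonzero diagonal entries. These entries are products of binomial coefficients $\binom{s}{j}$ and powers of $a_0$, which are nonzero mod $p$ since $s<p$. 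Counting the surviving targets gives tuples $(s_0,\dots,s_h)$, or equivalently monomials, constrained by a sum at most $M-1$ in $h+1$ variables. This yields $\binom{M+h}{h+1}$ by stars and bars.

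The nilpotency statement follows immediately, and the $h=1$ $a$-number is $g-\binom{M+1}{2}=\tfrac{p(p-1)}2-\tfrac{(p-1)(p+1)}8=\tfrac{(p-1)(3p-1)}8$. The main obstacle is the triangularity and nonvanishing argument when $a_0\neq0$: we must choose the monomial order so that contributions involving $a_\ell$ with $\ell\ge1$ are strictly lower. We will first try ordering by $y$-degree, then $x$-degree, and check the diagonal coefficient explicitly.
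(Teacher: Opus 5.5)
The reduction to $\Cart|_{\cU_h^{(h-1)}}$ via \cref{thm-A} and the $h=1$ arithmetic are fine. The central claim, however, is not established: you have not proved the rank of $\Cart^h$ for $a_0\neq0$, and the triangular mechanism you propose for it does not exist.

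The diagonal cannot consist of $a_0$-only terms. For $h=1$, $p=5$, the only surviving term of $\Cart(xy^2\,dx)$ comes from $2a_0a_1x^9$, so $\Cart(xy^2\,dx)=2\alpha_0\alpha_1\,x\,dx$. No other basis element reaches $x\,dx$, and without this column the rank is $2<3=\binom{M+1}{2}$.

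Nor can any ordering make a rank-certifying minor triangular. For $h=1$, $p=7$, the columns $x^2y^3\,dx$ and $y^4\,dx$ map into $\Span\{x^2\,dx,\,y\,dx\}$ with matrix $\left(\begin{smallmatrix}-3\alpha_0\alpha_1^2&6\alpha_0^2\alpha_1^2\\ 3\alpha_0^2&-4\alpha_0^3\end{smallmatrix}\right)$. All its entries are nonzero, and its determinant $-6\alpha_0^4\alpha_1^2$ is nonzero mod $7$ for a Vandermonde-type reason, not because of diagonal entries.

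Above all, you have no upper bound. The image of $\Cart^h$ is in general not spanned by monomials, so ``counting surviving targets'' gives nothing, and the value $\binom{M+h}{h+1}$ is asserted rather than derived.

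The missing idea is a grading. On the basis $x^{b+pt_1+\dots+p^{h-1}t_{h-1}}y^s\,dx$ of $\cU_h^{(h-1)}$, where $T=t_1+\dots+t_{h-1}$ and $b+T+s\le p-2$, survival forces $b+n+n_0=p-1$ exactly, hence $n_0\ge1$. This, not a weight being too large, is why $\Cart^h=0$ when $a_0=0$. Moreover, every output $x^{u_1+pu_2+\dots+p^{h-1}u_h}y^\sigma\,dx$ satisfies $u_1+\dots+u_h+2\sigma=\rho:=b+T+2s-(p-1)\le p-3$.

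Consequently $\Cart|_{\cU_h^{(h-1)}}$ is block diagonal with blocks $V_\rho\to W_\rho$ of size $N_\rho\times N_{p-3-\rho}$, which gives the upper bound $\sum_\rho\min\{N_\rho,N_{p-3-\rho}\}$. The matching lower bound needs full row rank for $\rho\le M-1$ and full column rank for $\rho\ge M$. The paper obtains it from a second block-triangular structure, grouping columns by $\bt$ and rows by $(u_1,\dots,u_{h-1})$. Its diagonal blocks are, up to nonzero row and column scalars in $\alpha_0$ and $\alpha_h$, the dense binomial matrices $\bigl(\binom{s}{\theta}\bigr)$ handled by \cref{lem-vandermonde}. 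The closed form then comes from $\sum_{\rho\le M-1}N_\rho+\sum_{\rho\le M-2}N_\rho$ via $\sigma\mapsto2\sigma$ and $\sigma\mapsto2\sigma+1$.

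The same grading repairs your argument for $\Cart^{h+1}=0$. Your description of $\Img(\Cart^h)$ as monomials with $i+s\le M-1$ is incorrect: for $h=1$, $p\ge5$, the image $\Cart(y^{p-2}\,dx)$ has a nonzero $x^{p-3}\,dx$ component. What is true is that $\Img(\Cart^h)\subseteq\bigoplus_\rho W_\rho$. A monomial of $W_\rho$ is then killed simply because its survival quantity satisfies $u_1+n+n_0\le u_1+2\sigma\le p-3<p-1$; no factor of $a_0$ is involved.
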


For $h=0$ one has $\Cart=0$ (\cref{prop-h0}).
\Cref{thm-A,thm-B} therefore give the complete sequence of Cartier ranks
(\cref{cor-sequence,tab-sequences}); it depends only on $p$, $h$ and on whether
$a_0=0$.
Three consequences are worth recording.
First, for $h\ge1$ the loci $a_0=0$ and $a_0\ne0$ share no Ekedahl--Oort type, so
the vanishing of $a_0$ is preserved by every isomorphism between members of the
family (\cref{rem-EO}).
Second, for $h\ge2$ both loci have the same $a$-number, and by Groen's bounds this is
the largest $a$-number among all curves $y^p-y=f(x)$ with $\deg f=p^h+1$
(\cref{cor-extremal-ranks}); the rank sequence, however, does not characterize the
family (\cref{rem-special}).
Third, the nilpotency index gives an obstruction to maximality over finite fields.
Recall that a curve $X$ of genus $g$ over $\F_\ell$, with $\ell$ a square, is
\emph{$\F_\ell$-maximal} (resp.\ \emph{$\F_\ell$-minimal}) if
$\#X(\F_\ell)=\ell+1+2g\sqrt\ell$ (resp.\ $\ell+1-2g\sqrt\ell$).

\begin{mainthm}\label{thm-C}
Let $h\ge1$ and $n\ge1$, and let $X$ be a curve over $\F_{p^{2n}}$ such that
$X\otimes\Fbar\cong C_R$ for some $R\in\Fbar[x]$ as in \eqref{eq-family}.
If $X$ is $\F_{p^{2n}}$-maximal or $\F_{p^{2n}}$-minimal, then $n\ge h$; if
moreover $a_0\ne0$, then $n\ge h+1$.
\end{mainthm}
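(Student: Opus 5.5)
The plan is to use the standard link between maximality and the Frobenius action on the Jacobian, and then feed in the nilpotency index from \cref{thm-B}. Put $q=p^n$, so $X$ lives over $\F_{q^2}$. If $X$ is $\F_{q^2}$-maximal, every Frobenius eigenvalue equals $-q$. Since the Frobenius endomorphism $\pi$ of $\Jac(X)$ is semisimple, this gives $\pi=-q$ in $\mathrm{End}(\Jac(X))$. In the minimal case every eigenvalue is $q$, and semisimplicity gives $\pi=q$. In either case $\pi=\pm p^n$ on $J=\Jac(X)$.

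Next we transport this identity to $H^0(X,\Omg)$ via the Dieudonné-theoretic description of $\Cart$. On $H^1_{dR}$, or more precisely on the Dieudonné module of $J[p]$ where $\Cart$ is identified with $V$ restricted to $H^0(X,\Omg)=V\,H^1_{dR}$, the relative Frobenius $F$ and the Verschiebung $V$ satisfy $FV=VF=p$. The $q^2$-power Frobenius endomorphism $\pi$ acts on the Dieudonné module $\mathbb D(J)$ over $W(\F_{q^2})$ as $F^{2n}$, which is linear over $W(\F_{q^2})$. From $F^{2n}=\pm p^n$ we get $F^{2n}=\pm F^nV^n$. Since $F$ is injective on $\mathbb D(J)$ (it is torsion-free), cancelling gives $F^{n}=\pm V^{n}$ on $\mathbb D(J)$.

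Now reduce modulo $p$. On $\mathbb D(J)/p=H^1_{dR}(X)$, the kernel of $F$ is exactly $H^0(X,\Omg)$, which equals $V\,H^1_{dR}$. Take $\omega\in H^0(X,\Omg)$ and write $\omega=V\eta$. Then
\[
V^{n}\omega=V^{n+1}\eta=\pm F^{n}V\eta=\pm F^{n-1}(FV\eta)=\pm F^{n-1}(p\eta)=0 \pmod p .
\]
Hence $\Cart^{n}=0$ on $H^0(X,\Omg)$, for $n\ge1$. (This is the argument used by Garcia--Tafazolian \cite{GarciaTafazolian2008}, and it could also be cited directly.) Because the Cartier operator commutes with base change up to Frobenius twist, its ranks are geometric invariants. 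So $\Cart^{n}=0$ on $H^0(C_R,\Omg)$ as well.

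Finally we apply \cref{thm-B}. The nilpotency index of $\Cart$ on $C_R$ is $h$ when $a_0=0$ and $h+1$ when $a_0\ne0$. Therefore $\Cart^n=0$ forces $n\ge h$ in general, and $n\ge h+1$ when $a_0\ne0$.

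There is one subtlety: the condition $a_0\ne0$ must be well defined for $X$, since $R$ is only determined up to isomorphism. This is harmless, because the argument applies to whichever $R$ the hypothesis provides. Moreover the rank sequence is an isomorphism invariant by \cref{thm-B}.

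I expect the main obstacle to be the careful justification of $F^n=\pm V^n$ and of its consequence $V^n=0$ on $H^0$. This involves the semilinearity bookkeeping ($F^{2n}$ is linear over $W(\F_{p^{2n}})$), the injectivity of $F$ on $\mathbb D(J)$, and the identification of $\Cart$ with $V$. If that becomes delicate, I would instead cite the known result that an $\F_{p^{2n}}$-maximal or minimal curve has $\Cart^n=0$ on holomorphic differentials \cite{GarciaTafazolian2008}.
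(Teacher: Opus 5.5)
Your proposal is correct and follows the paper's proof: the Garcia--Tafazolian criterion (\cref{thm-GT}) gives $\Cart^n=0$ on $H^0(C_R,\Omg)$, and the nilpotency index from \cref{thm-B} then forces $n\ge h$, and $n\ge h+1$ when $a_0\ne0$. The paper simply cites the criterion, whereas your Dieudonn\'e-module argument ($F^{2n}=\pm p^n$ gives $F^n=\pm V^n$, hence $V^n=0$ on $V\,H^1_{dR}=H^0(X,\Omg)$) is a valid self-contained proof of it; one small correction is that invariance of the Cartier ranks under isomorphism is intrinsic rather than a consequence of \cref{thm-B}, and \cref{thm-B} is what makes the condition $a_0=0$ an invariant (cf.\ \cref{rem-EO}).
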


The bound $n\ge h$ is attained by the curves with $a_0=\dots=a_{h-1}=0$ and
$a_h^{p^h}+a_h=0$, which are $\F_{p^{2h}}$-maximal
\cite{OzbudakSaygi2016,ItoTatematsuTsushima2026}.
The bound $n\ge h+1$ is attained for $(p,h)=(3,1)$, $(3,2)$ and $(5,2)$
(\cref{ex-sharp}).
For maximal curves given by an equation \eqref{eq-family} over a finite field, the
inequality $n\ge h$ was proved by Ito, Tatematsu and Tsushima using the explicit
$L$-polynomial \cite[Proposition~4.13]{ItoTatematsuTsushima2026}.
\Cref{thm-C} extends it to minimal curves and to all forms of $C_R$, and refines it
according to $a_0$; its proof uses only \cref{thm-B} and the Garcia--Tafazolian
criterion \cite{GarciaTafazolian2008}.

\subsection*{Relation to Groen's bounds}
Our proofs are self-contained and elementary: they use an explicit monomial basis, a
closed formula for $\Cart$ (\cref{prop-expansion}), and block-triangular minors.
The lower bound $\rank(\Cart^r)\ge p^{h-r-1}\binom{p+r}{r+2}$ for $r\le h-1$ can
also be obtained from Groen's inequality combined with the monomial case
(\cref{rem-groen}).
The new ingredients are the explicit, coefficient-free description of
$\Img(\Cart^r)$, which yields the matching upper bound; the analysis of the last
iterate, where the coefficient $a_0$ appears; and the consequences over finite
fields.

\subsection*{Organization}
\Cref{sec-prelim} collects background and notation.
\Cref{sec-basis,sec-expansion} give a basis of regular differentials and the
expansion of $\Cart$.
\Cref{sec-small} treats $h=0$ and the Hermitian case.
\Cref{sec-first} computes $\Img(\Cart)$ for $h\ge2$, \cref{sec-higher} proves
\cref{thm-A}, and \cref{sec-last} proves \cref{thm-B}.
\Cref{sec-consequences} discusses Ekedahl--Oort types, extremality and
characteristic~$2$, and \cref{sec-finite} proves \cref{thm-C}.

\section{Preliminaries}\label{sec-prelim}

Throughout, $k$ is an algebraically closed field of characteristic $p>2$, and all
curves are smooth, projective and irreducible.

\subsection{Artin--Schreier curves}

\begin{proposition}[{\cite[Proposition~3.7.8]{Stichtenoth2009}}]\label{prop-AS}
Let $f\in k[x]$ be a polynomial of degree $d$ with $p\nmid d$. Then the curve
$C\colon y^p-y=f(x)$ has a unique point $P_\infty$ above $x=\infty$; the cover
$C\to\Pone_x$ is totally ramified at $P_\infty$ and unramified elsewhere, and
\[
  v_{P_\infty}(x)=-p,\qquad v_{P_\infty}(y)=-d,\qquad
  g(C)=\frac{(p-1)(d-1)}{2}.
\]
Moreover $\operatorname{div}(dx)=(2g-2)P_\infty$
\cite[Proposition~6.4.1]{Stichtenoth2009}.
\end{proposition}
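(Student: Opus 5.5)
This is a standard fact about Artin--Schreier curves, so I will follow the classical argument rather than invent a new one. Put $F=k(x,y)$ with $y^p-y=f(x)$ and $\deg f=d$, $p\nmid d$. Over $k(x)$ the place $x=\infty$ is the pole of $x$, and $v_\infty(f)=-d$ with $d$ prime to $p$. By the standard Artin--Schreier theory, $F/k(x)$ is then totally ramified at $\infty$.

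To see this directly, take any place $P$ of $F$ above $\infty$, with ramification index $e\le p$. Then $v_P(y^p-y)=e\,v_\infty(f)=-ed<0$. This forces $v_P(y)<0$, hence $p\,v_P(y)=-ed$. Since $\gcd(p,d)=1$, $p$ divides $e$, so $e=p$. Therefore there is a unique place $P_\infty$ above $\infty$, with $v_{P_\infty}(x)=-p$ and $v_{P_\infty}(y)=-d$.

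At a finite place $x=\alpha$ the polynomial $T^p-T-f$ has derivative $-1$, a unit. Hence it is separable modulo every place, and these places are unramified.

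Next I compute the different exponent at $P_\infty$. I expect this to be the only genuinely delicate step. I will use the element $t=x^{a}y^{b}$ with $-pa-db=1$ as a uniformizer, or more simply apply the standard Artin--Schreier formula $d(P_\infty)=(p-1)(d+1)$. This formula follows from Hilbert's different formula, since the lower ramification groups are $G_0=\dots=G_d\cong\mathbb{Z}/p$ and $G_{d+1}=1$. To check the jump at $d$, I will evaluate $v_{P_\infty}(\sigma(t)-t)$ for $\sigma\colon y\mapsto y+1$ and $t$ a uniformizer. Riemann--Hurwitz then gives
\[
2g-2=p(-2)+(p-1)(d+1),
\]
so $g=(p-1)(d-1)/2$.

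For the divisor of $dx$, recall that $x$ has no finite critical points on $\Pone$. Thus $dx$ on $\Pone$ has divisor $-2\infty$, and pulling back gives
\[
\operatorname{div}_F(dx)=\mathrm{Con}(-2\infty)+\mathrm{Diff}=-2pP_\infty+(p-1)(d+1)P_\infty=(2g-2)P_\infty.
\]
Alternatively, one can differentiate the equation to get $-dy=f'(x)\,dx$ and check regularity at finite points directly.
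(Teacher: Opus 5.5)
Your proposal is correct. It is the standard argument behind the references the paper cites: the paper gives no proof of its own and quotes Stichtenoth, Propositions 3.7.8 and 6.4.1, which proceed the same way (total ramification from $p\,v(y)=-ed$, Kummer's theorem at finite places, different exponent $(p-1)(d+1)$ via $v(\sigma(t)-t)$, then Hurwitz and $\operatorname{div}(dx)=\mathrm{Con}(-2\infty)+\mathrm{Diff}$). The one computation you only announce does go through: $-pa-db=1$ forces $p\nmid b$, so $\sigma(t)/t-1=(1+y^{-1})^b-1$ has valuation exactly $v_{P_\infty}(y^{-1})=d$, hence $\sigma\in G_d\setminus G_{d+1}$, as you need.
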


For the family \eqref{eq-family} we fix the notation
\[
  f(x)=xR(x)=\sum_{\nu=0}^{h}a_\nu x^{p^\nu+1},\qquad d=p^h+1,\qquad
  M=\frac{p-1}{2},\qquad \alpha_\nu=a_\nu^{1/p}\in k .
\]
Since $p\nmid d$, \cref{prop-AS} gives $g(C_R)=p^h(p-1)/2$ and
$v_{P_\infty}(y)=-(p^h+1)$.

\begin{remark}[Normalizations]\label{rem-normalization}
Over $k$, the substitution $x=\xi X$ with $\xi^{p^h+1}=a_h^{-1}$ normalizes the
leading coefficient to $1$ and preserves the vanishing of $a_0$; we nevertheless
keep $a_h$ in the calculations.
More precisely, replacing $xR(x)$ by $xR(x)+A(x)+c$, with $A$ additive and $c\in k$,
does not produce a new isomorphism class outside the family over $k$.
Indeed, modulo Artin--Schreier coboundaries $w^p-w$, every term $bx^{p^i}$ reduces to
$b^{1/p^i}x$, and constants can be removed.
To remove a linear term $cx$, substitute $x=X+\lambda$: modulo coboundaries and
constants,
\[
  xR(x)\equiv XR(X)+\Phi(\lambda)X,
  \qquad \Phi(\lambda)=R(\lambda)+\sum_{i=0}^{h}(a_i\lambda)^{1/p^i}.
\]
For $h\ge1$ the function $\lambda\mapsto\Phi(\lambda)^{p^h}$ is a polynomial of degree
$p^{2h}$ with leading coefficient $a_h^{p^h}$, and for $h=0$ one has
$\Phi(\lambda)=2a_0\lambda$.
In both cases $\Phi$ is surjective on $k$, so $\Phi(\lambda)=-c$ has a solution.
Over a finite field these normalizations require extracting roots and are not
available in general; this is why \cref{thm-C} is stated for curves that are only
geometrically isomorphic to some $C_R$.
\end{remark}

\subsection{The Cartier operator}

Let $F=k(C)$ and let $x$ be a separating element. Every $\phi\in F$ has a unique
expansion $\phi=\sum_{s=0}^{p-1}\phi_s^p\,x^s$ with $\phi_s\in F$.

\begin{definition}[{\cite{Cartier1957,Serre1958}}]\label{def-cartier}
The \emph{Cartier operator} $\Cart\colon\Omega_F\to\Omega_F$ is
$\Cart\big(\sum_{s=0}^{p-1}\phi_s^p\,x^s\,dx\big)=\phi_{p-1}\,dx$.
It does not depend on the choice of $x$, and it maps $H^0(C,\Omg)$ to itself.
\end{definition}

We use only the following consequences of the definition:
\begin{align}
  \Cart(\omega+\eta)&=\Cart(\omega)+\Cart(\eta), &
  \Cart(\phi^p\omega)&=\phi\,\Cart(\omega), \label{eq-C1}\\
  \Cart(x^m\,dx)&=
  \begin{cases}
    x^{(m+1)/p-1}\,dx, & m\equiv p-1\pmod p,\\
    0, & m\not\equiv p-1\pmod p.
  \end{cases} \label{eq-C2}
\end{align}

\begin{remark}[Semilinearity]\label{rem-semilinear}
By \eqref{eq-C1}, $\Cart$ is $p^{-1}$-linear and $\Cart^r$ is $p^{-r}$-linear.
Hence the image of a $k$-subspace under $\Cart$ is again a $k$-subspace, and
$\Img(\Cart^{r+1})=\Cart(\Img\Cart^r)$.
If $\omega_1,\dots,\omega_g$ is a basis of $H^0(C,\Omg)$ and
$\Cart(\omega_j)=\sum_i m_{ij}\omega_i$, then the matrix of $\Cart^r$ in this basis
is $\mathsf M\,\mathsf M^{(1/p)}\cdots\mathsf M^{(1/p^{r-1})}$, where
$\mathsf M=(m_{ij})$ and $\mathsf M^{(1/p^e)}=(m_{ij}^{1/p^e})$.
By $\rank(\Cart^r)$ we mean $\dim_k\Img(\Cart^r)$, which is the rank of this matrix.
For a curve over a perfect field these ranks are computed after extension to an
algebraic closure, and they are invariants of the curve over that closure.
\end{remark}

\begin{proposition}[see {\cite{Pries2008}}]\label{prop-a-number}
For a curve $C$ of genus $g$ one has $a(\Jac(C))=g-\rank(\Cart)$, and the $p$-rank
of $\Jac(C)$ equals $\rank(\Cart^N)$ for $N\gg0$.
\end{proposition}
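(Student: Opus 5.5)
This is a standard fact (see \cite{Pries2008}), and I would prove it from the linear algebra of the $p^{-1}$-linear operator $\Cart$ on $V=H^0(C,\Omg)$, combined with two classical identifications. The plan has two parts, one for the $a$-number and one for the $p$-rank.

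\emph{The $a$-number.} First I would recall Oda's theorem: the Dieudonn\'e module of $\Jac(C)[p]$ is $H^1_{dR}(C)$. Under this identification the Verschiebung $V$ restricts, on the Hodge piece $H^0(C,\Omg)$, to the Cartier operator. The $a$-number is
\[
a(\Jac(C))=\dim_k\operatorname{Hom}(\alpha_p,\Jac(C)[p]),
\]
which is the dimension of $\ker F\cap\ker V$ on the Dieudonn\'e module. Since $\ker F=\Img V=H^0(C,\Omg)$, this intersection equals $\ker\Cart|_{H^0(C,\Omg)}$. Because $\Cart$ is $p^{-1}$-linear and $k$ is perfect, $\ker\Cart$ and $\Img\Cart$ are $k$-subspaces (\cref{rem-semilinear}). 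Rank--nullity holds for semilinear maps: compose with the inverse of the Frobenius twist of the target, which is a bijection. This gives $\dim\ker\Cart=g-\rank(\Cart)$.

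\emph{The $p$-rank.} Here I would use the Fitting decomposition for the semilinear operator on the finite-dimensional space $V$. The images $\Img(\Cart^r)$ form a decreasing chain of subspaces, so they stabilize to some $V_s$, on which $\Cart$ is bijective, and $V=V_s\oplus V_n$ with $\Cart$ nilpotent on $V_n$. By the Lang--Steinberg (Dieudonn\'e) lemma over algebraically closed $k$, $V_s$ has a basis of $\Cart$-fixed forms. Those are exactly the logarithmic differentials $df/f$, and these correspond to $\Jac(C)[p](k)\otimes\F_p$. Concretely, $\Jac(C)[p](k)\cong(\Z/p)^{f}$ injects into $H^0(C,\Omg)^{\Cart=1}$ via $D\mapsto df/f$ where $pD=\operatorname{div}(f)$, and this map is surjective onto the fixed forms. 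The fixed space spans $V_s$ and has $\F_p$-dimension $\dim_kV_s$. Hence the $p$-rank equals $\dim V_s=\rank(\Cart^N)$ for $N\ge g$.

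The main obstacle is the identification of $\Cart$-fixed differentials with $p$-torsion points: the injectivity and surjectivity of $D\mapsto df/f$. This is classical, due to Serre \cite{Serre1958}, and here I would simply cite it rather than reprove it.
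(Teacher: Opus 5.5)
Your argument is correct and is the standard one: Oda's identification of the Dieudonn\'e module of $\Jac(C)[p]$ with $H^1_{dR}(C)$, with $\ker F=\Img V=H^0(C,\Omg)$, handles the $a$-number, and the Fitting decomposition together with the Lang--Steinberg lemma and Serre's isomorphism $\Jac(C)[p](k)\cong H^0(C,\Omg)^{\Cart=1}$ handles the $p$-rank. The paper gives no proof of its own and only cites \cite{Pries2008}, which records these same facts, so there is nothing further to compare; the only blemish is that you use the letter $V$ both for Verschiebung and for $H^0(C,\Omg)$.
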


The isomorphism class of the group scheme $A[p]$ of a principally polarized abelian
variety $A$ is classified by its \emph{Ekedahl--Oort type}
\cite{Oort2001,Moonen2001}. It determines the ranks of all iterates of Frobenius and
Verschiebung on the Dieudonn\'e module of $A[p]$; for $A=\Jac(C)$ the sequence
$(\rank\Cart^r)_{r\ge1}$ is therefore an invariant of the Ekedahl--Oort type.
One has $0\le a(A)\le g$, and $a(A)=g$ if and only if $A$ is \emph{superspecial},
i.e.\ isomorphic to a product of supersingular elliptic curves \cite{Pries2008}.

\subsection{Combinatorial tools}

\begin{lemma}\label{lem-binom-nonzero}
If $0\le t\le s\le p-1$, then $\binom{s}{t}\not\equiv0\pmod p$.
\end{lemma}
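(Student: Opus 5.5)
The most direct route is to write the binomial coefficient as a product and check that no factor is divisible by $p$. Since $0\le t\le s$, we have
\[
  \binom{s}{t}=\frac{s!}{t!\,(s-t)!},
\]
an integer, and all of $s$, $t$, $s-t$ lie in $\{0,1,\dots,p-1\}$.

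Because $p$ is prime, it divides a factorial $m!$ if and only if $m\ge p$. Here $s\le p-1$, so $s!$ is a product of positive integers strictly smaller than $p$, and none of these is divisible by $p$. Hence $p\nmid s!$.

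The identity $s!=\binom{s}{t}\,t!\,(s-t)!$ holds in $\Z$. If $p$ divided $\binom{s}{t}$, it would divide the right-hand side, and therefore $s!$, which we just ruled out. So $\binom{s}{t}\not\equiv0\pmod p$.

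There is no real obstacle; the only point to watch is the boundary cases $t=0$ and $t=s$. Both give the value $1$, and the argument covers them uniformly through the convention $0!=1$.

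Alternatively, Lucas' theorem gives the result immediately, since $s$ and $t$ are single base-$p$ digits. The elementary factorial argument is self-contained, so that is the one I would present.
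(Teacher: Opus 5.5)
Your proof is correct and is essentially the paper's argument: the paper also just observes that the numerator $s!$ has no factor $p$. You make explicit the step $s!=\binom{s}{t}\,t!\,(s-t)!$ in $\Z$, which is what transfers $p\nmid s!$ to $p\nmid\binom{s}{t}$.
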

\begin{proof}
The numerator of $\binom{s}{t}=\frac{s!}{t!(s-t)!}$ has no factor $p$.
\end{proof}

\begin{lemma}\label{lem-trinomial}
For integers $j\ge\tau+q\ge0$ with $\tau,q\ge0$,
\[
  \binom{j}{\tau}\binom{j-\tau}{q}=\frac{j!}{\tau!\,q!\,(j-\tau-q)!}
  =\binom{j}{\tau+q}\binom{\tau+q}{\tau}.
\]
\end{lemma}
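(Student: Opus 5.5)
The identity is a direct manipulation of factorials, so the plan is to expand both sides and compare. Under the hypothesis $j\ge\tau+q\ge0$ with $\tau,q\ge0$, every binomial coefficient that appears has top entry at least its bottom entry, and all bottom entries are nonnegative. The relevant inequalities are $j\ge\tau$, $j-\tau\ge q$, $j\ge\tau+q$ and $\tau+q\ge\tau$. Hence each coefficient equals the usual factorial quotient, with no degenerate zero cases to handle separately.

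For the left-hand side, I will write $\binom{j}{\tau}=\frac{j!}{\tau!\,(j-\tau)!}$ and $\binom{j-\tau}{q}=\frac{(j-\tau)!}{q!\,(j-\tau-q)!}$. Multiplying these, the factor $(j-\tau)!$ cancels and leaves $\frac{j!}{\tau!\,q!\,(j-\tau-q)!}$.

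For the right-hand side, I will write $\binom{j}{\tau+q}=\frac{j!}{(\tau+q)!\,(j-\tau-q)!}$ and $\binom{\tau+q}{\tau}=\frac{(\tau+q)!}{\tau!\,q!}$. Here the factor $(\tau+q)!$ cancels and gives the same trinomial expression. This proves both equalities at once.

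There is no real obstacle. The only point requiring care is checking that the hypotheses keep every argument in the range where the factorial formula is valid, which is exactly the list of inequalities in the first paragraph.
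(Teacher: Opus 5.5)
Your proof is correct and is the intended argument. The paper states this lemma without proof, and its middle factorial expression is exactly what you obtain by expanding each pair of binomials and cancelling $(j-\tau)!$ on one side and $(\tau+q)!$ on the other; your range check ensures no degenerate cases arise.
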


\begin{lemma}[Binomial Vandermonde matrices]\label{lem-vandermonde}
Let $\lambda_1,\dots,\lambda_n\in\{0,1,\dots,p-1\}$ be distinct and let
$0\le D\le p-1$. Then the $(D+1)\times n$ matrix
$\big(\binom{\lambda_\beta}{t}\big)_{0\le t\le D,\ 1\le\beta\le n}$ has rank
$\min\{D+1,n\}$ over $\F_p$.
\end{lemma}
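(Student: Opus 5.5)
The statement to prove is \cref{lem-vandermonde}: for distinct $\lambda_1,\dots,\lambda_n\in\{0,\dots,p-1\}$ and $0\le D\le p-1$, the $(D+1)\times n$ matrix $B=\big(\binom{\lambda_\beta}{t}\big)$ has rank $\min\{D+1,n\}$ over $\F_p$. The plan is to reduce $B$, by an invertible change of rows, to an honest Vandermonde matrix in the distinct elements $\lambda_\beta\in\F_p$.

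For each $t$ with $0\le t\le D\le p-1$, the polynomial $\binom{X}{t}=X(X-1)\cdots(X-t+1)/t!$ lies in $\F_p[X]$. This is legitimate because $t!$ is invertible mod $p$ when $t\le p-1$. Its degree is exactly $t$ and its leading coefficient is $1/t!\neq0$. Moreover, evaluating it at $\lambda\in\{0,\dots,p-1\}$, viewed in $\F_p$, gives the residue of the integer $\binom{\lambda}{t}$. Consequently $\binom{X}{0},\dots,\binom{X}{D}$ and $1,X,\dots,X^D$ are two bases of the space of polynomials of degree $\le D$ over $\F_p$, related by an invertible lower triangular matrix $T$. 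This gives $B=T\,V$, where $V=(\lambda_\beta^{\,t})_{0\le t\le D,\,1\le\beta\le n}$, so $\rank B=\rank V$.

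It remains to compute $\rank V$. The $\lambda_\beta$ are distinct elements of $\F_p$, since they are distinct integers in $[0,p-1]$. Set $m=\min\{D+1,n\}$. Consider the $m\times m$ submatrix formed by the first $m$ rows and the first $m$ columns. It is a square Vandermonde matrix, and its determinant is $\prod_{\beta<\beta'\le m}(\lambda_{\beta'}-\lambda_\beta)\neq0$. Hence $\rank V\ge m$. Since $V$ has only $D+1$ rows and $n$ columns, $\rank V\le m$ trivially, so $\rank V=m$.

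The only point needing care is the hypothesis $D\le p-1$. It guarantees that $t!$ is invertible, so that $\binom{X}{t}$ is a genuine degree-$t$ polynomial over $\F_p$. It is also the step where distinctness of the $\lambda_\beta$ in $\F_p$, not just as integers, is used. The hypothesis is genuinely needed: without it the triangular change of basis fails.
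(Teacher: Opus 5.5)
Your proof is correct and is essentially the paper's argument: the paper likewise writes $\binom{X}{t}=X^t/t!+(\text{lower terms})$ with $t!$ invertible for $t\le D<p$, factors the matrix as an invertible lower triangular matrix times $(\lambda_\beta^{\,t})$, and concludes from a nonvanishing Vandermonde minor on the first rows. Only your closing aside is slightly off: $D\le p-1$ is needed for this change of basis but not for the conclusion itself (for $t\ge p$ the rows $\binom{\lambda_\beta}{t}$ vanish, and $n\le p$), and it plays no role in making the $\lambda_\beta$ distinct in $\F_p$, which comes from $\lambda_\beta\in\{0,\dots,p-1\}$.
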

\begin{proof}
Since $t\le D<p$, $\binom{X}{t}=X^t/t!+(\text{terms of lower degree})$ with $t!$
invertible modulo $p$. Hence the matrix equals $\Lambda\cdot(\lambda_\beta^t)$ for an
invertible lower triangular matrix $\Lambda$, and $(\lambda_\beta^t)$ has rank
$\min\{D+1,n\}$ because its maximal square minors formed by the first rows are
Vandermonde determinants of distinct elements of $\F_p$.
\end{proof}

\begin{lemma}[{\cite{Stanley2011}}]\label{lem-stars}
For integers $n\ge1$ and $K\ge0$, the number of $(w_1,\dots,w_n)\in\Z_{\ge0}^n$ with
$w_1+\dots+w_n\le K$ is $\binom{K+n}{n}$.
\end{lemma}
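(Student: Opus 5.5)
This is the classical stars-and-bars count, and I will argue by a bijection that adds one slack variable. To a tuple $(w_1,\dots,w_n)\in\Z_{\ge0}^n$ with $w_1+\dots+w_n\le K$ I assign the $(n+1)$-tuple $(w_1,\dots,w_n,w_{n+1})$, where $w_{n+1}=K-(w_1+\dots+w_n)\ge0$. Its inverse simply forgets the last coordinate. So the tuples in the statement correspond bijectively to the tuples in $\Z_{\ge0}^{n+1}$ whose coordinate sum is exactly $K$.

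Next I count the solutions of $w_1+\dots+w_{n+1}=K$ in nonnegative integers. I encode each such solution as a word of $K$ stars and $n$ bars: write $w_1$ stars, then a bar, then $w_2$ stars, then a bar, and so on, ending with $w_{n+1}$ stars. Conversely, any arrangement of $K$ stars and $n$ bars in a row of $K+n$ positions comes from exactly one solution: $w_i$ is the number of stars between the $(i-1)$-st and the $i$-th bar, with the conventions that $w_1$ counts the stars before the first bar and $w_{n+1}$ those after the last. An arrangement is determined by which $n$ of the $K+n$ positions hold bars, so the count is $\binom{K+n}{n}$.

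As an alternative check, one could argue by induction on $K$ using Pascal's rule $\binom{K+n}{n}=\binom{K+n-1}{n}+\binom{K+n-1}{n-1}$. This splits the tuples into those with sum $\le K-1$ and those with sum exactly $K$, the latter counted as $\binom{K+n-1}{n-1}$ by the same bar argument. The bijective proof is the cleaner of the two.

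There is no real obstacle here. The only points needing care are that the bar construction is a genuine bijection, which includes allowing empty blocks so that zeros are permitted, and the edge case $K=0$: there the only tuple is the zero tuple, and $\binom{n}{n}=1$, consistent with the formula.
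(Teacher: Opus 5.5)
Your proof is correct: the slack variable $w_{n+1}=K-(w_1+\dots+w_n)$ reduces the count to nonnegative solutions of $w_1+\dots+w_{n+1}=K$, and the stars-and-bars bijection then gives $\binom{K+n}{n}$. The paper gives no proof of its own and simply cites Stanley, where this is the standard stars-and-bars (multiset) count, so your argument is essentially the intended one.
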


\begin{lemma}[Block-triangular matrices]\label{lem-blocktri}
Let $\Theta$ be a matrix over a field whose rows and columns are partitioned into
groups indexed by the same finite totally ordered set $(I,\preceq)$; some groups may
be empty. Write $\Theta_{J'J}$ for the block with rows in group $J'$ and columns in
group $J$, and assume that $\Theta_{J'J}=0$ whenever $J\prec J'$
\textup{(}or whenever $J'\prec J$\textup{)}.
\begin{enumerate}[label=\textup{(\roman*)}, leftmargin=2em]
  \item If every diagonal block $\Theta_{JJ}$ has full row rank, then $\Theta$ has
    full row rank.
  \item If every $\Theta_{JJ}$ has full column rank, then $\Theta$ has full column
    rank.
  \item If every $\Theta_{JJ}$ is square and invertible, then $\Theta$ is invertible.
\end{enumerate}
Here a block with no rows has full row rank, and a block with no columns has full
column rank.
\end{lemma}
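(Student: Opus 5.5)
We argue by induction on $|I|$, using a block decomposition with respect to the largest element; part (i) follows from (ii) by transposition. Transposing $\Theta$ exchanges rows and columns, turns ``full row rank'' into ``full column rank'', and turns the condition ``$\Theta_{J'J}=0$ whenever $J\prec J'$'' into ``$(\Theta^{T})_{JJ'}=0$ whenever $J\prec J'$'', which is the other allowed triangularity condition. Hence it suffices to prove (ii) under either triangularity hypothesis. The two hypotheses are exchanged by reversing the order $\preceq$, so it suffices to treat one of them, say $\Theta_{J'J}=0$ whenever $J\prec J'$. Under this hypothesis, block rows below the diagonal vanish, so the matrix is block upper triangular.

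For (ii), let $J_{\max}$ be the largest element of $I$. Write $\Theta=\begin{pmatrix}A&B\\0&D\end{pmatrix}$, where the last block row and column correspond to $J_{\max}$ and $D=\Theta_{J_{\max}J_{\max}}$. The lower-left block is zero because it consists of the blocks $\Theta_{J_{\max}J}$ with $J\prec J_{\max}$. The matrix $A$ is again block upper triangular with respect to $I\setminus\{J_{\max}\}$, and its diagonal blocks have full column rank, so by induction $A$ has full column rank. (The base case $|I|=1$ is trivial, and empty groups cause no trouble with the stated conventions.)

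Now suppose $\Theta\binom{u}{v}=0$. The bottom block row gives $Dv=0$, so $v=0$ since $D$ has full column rank. Then $Au=0$, so $u=0$. Therefore $\Theta$ has full column rank. If $D$ has no columns, then $v$ is absent and the argument still applies; if $D$ has no rows, then it has full column rank only when it also has no columns, so this case is consistent with the convention.

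For (iii), every diagonal block is square, so $\Theta$ is square, with the number of rows equal to the number of columns as a sum over groups. By (ii), $\Theta$ has full column rank, hence it is invertible.

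The only delicate point is the bookkeeping with empty groups and the transposition, which swaps which of the two triangularity conditions holds. The linear algebra itself is routine.
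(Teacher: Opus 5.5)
Your proof is correct, but it runs in the opposite direction from the paper's.

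The paper proves (i) directly and in one step. In each diagonal block $\Theta_{JJ}$ it picks a set of columns on which the block restricts to an invertible square matrix $S_J$. The submatrix of $\Theta$ formed by all rows and these chosen columns is then square and block triangular with diagonal blocks $S_J$, so its determinant is $\prod_J\det S_J\neq0$. Part (ii) is (i) for the transpose, and (iii) combines the two.

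You instead prove (ii) by induction on $|I|$. You peel off the largest group and use back-substitution on the kernel: $Dv=0$ gives $v=0$, and then $Au=0$ gives $u=0$. You obtain (i) by transposition, and your (iii) needs only (ii) together with the fact that $\Theta$ is square.

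What each route buys:
\begin{itemize}
\item Your route avoids determinants and the selection of minors. You are also more explicit than the paper in three places: that transposition exchanges the two triangularity conditions, that reversing the order covers the second condition, and the bookkeeping for empty groups.
\item The paper's route needs no induction. It produces an explicit nonvanishing maximal minor, which is the kind of certificate (an invertible square submatrix) the paper relies on throughout its rank lower bounds.
\end{itemize}
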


\begin{proof}
For (i), choose in each $\Theta_{JJ}$ a set of columns on which it restricts to an
invertible square matrix $S_J$ (no columns if $\Theta_{JJ}$ has no rows). The
submatrix of $\Theta$ formed by all rows and the chosen columns is square and block
triangular with diagonal blocks $S_J$, so its determinant is
$\prod_J\det S_J\neq0$. Part (ii) is (i) applied to the transpose, and (iii) follows
from (i) and (ii).
\end{proof}

\section{A basis of regular differentials}\label{sec-basis}

\begin{lemma}\label{lem-basis}
Assume $h\ge1$. A basis of $H^0(C_R,\Omg)$ is
\[
  \cB_h=\Big\{\omega_{i,j}=x^iy^j\,dx\ :\ 0\le j\le p-2,\ \ 0\le i\le p^{h-1}(p-1-j)-1\Big\}.
\]
\end{lemma}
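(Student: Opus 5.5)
The plan is the standard argument for one-point Artin--Schreier curves: show that each $\omega_{i,j}$ is regular, show that the family is linearly independent, and check that it has the right number of elements.

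\emph{Regularity.} By \cref{prop-AS}, the only possible pole of $x^iy^j\,dx$ is at $P_\infty$. Indeed, $x$ and $y$ are regular on the affine part, and $dx$ has divisor $(2g-2)P_\infty$. Using $v_{P_\infty}(x)=-p$, $v_{P_\infty}(y)=-d$ with $d=p^h+1$, and $2g-2=(p-1)p^h-2$, we get
\[
  v_{P_\infty}(\omega_{i,j})=-pi-jd+(p-1)p^h-2 .
\]
This is $\ge0$ if and only if $pi\le (p-1-j)p^h-j-2$. Since $p\mid pi$ and the right side is $\equiv -j-2 \pmod p$, the condition is equivalent to $i\le p^{h-1}(p-1-j)-1-\lceil (j+2)/p\rceil+1$, that is, to $i\le p^{h-1}(p-1-j)-1$. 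This uses $1\le j+2\le p$ for $0\le j\le p-2$, so $\lceil (j+2)/p\rceil=1$. Hence every element of $\cB_h$ lies in $H^0(C_R,\Omg)$.

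\emph{Independence.} I would compare pole orders. Since $\gcd(p,d)=1$ and $0\le j\le p-2$, the values $pi+jd$ are pairwise distinct for distinct pairs $(i,j)$: if $pi+jd=pi'+j'd$, then $p\mid (j-j')d$, so $j=j'$, and then $i=i'$. Thus the functions $x^iy^j$ have distinct valuations at $P_\infty$. A nontrivial linear relation would have a unique term of minimal valuation, which is impossible. So $\cB_h$ is linearly independent.

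\emph{Counting.} The cardinality is
\[
  \sum_{j=0}^{p-2}p^{h-1}(p-1-j)=p^{h-1}\frac{(p-1)p}{2}=\frac{p^h(p-1)}{2}=g(C_R).
\]
Hence $\cB_h$ is a basis. The step needing the most care is the floor computation in the regularity bound, where one uses $j\le p-2$ and $h\ge1$ so that $p^{h-1}$ is an integer. The remaining steps are routine.
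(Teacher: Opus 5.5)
Your proof is correct and follows the paper's argument: the same valuation computation at $P_\infty$, where $2\le j+2\le p$ gives the sharp bound $i\le p^{h-1}(p-1-j)-1$, and the same count $\sum_{j} p^{h-1}(p-1-j)=g$. The only difference is the independence step: you use that the pole orders $pi+jd$ are pairwise distinct (from $\gcd(p,d)=1$ and $0\le j\le p-2$), whereas the paper uses that $1,y,\dots,y^{p-1}$ are linearly independent over $k(x)$; both arguments are valid and equally short.
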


\begin{proof}
By \cref{prop-AS}, $dx$ has neither zeros nor poles on the affine part of $C_R$, where
$x$ and $y$ are regular. Hence $x^iy^j\,dx$ with $i,j\ge0$ is regular away from
$P_\infty$, and it is regular at $P_\infty$ if and only if
\[
  -pi-(p^h+1)j+p^h(p-1)-2\ge0 .
\]
If $j\ge p-1$, then $pi+(p^h+1)j\ge p^h(p-1)+p-1$, so this fails. For
$0\le j\le p-2$ the inequality reads $pi\le p\cdot p^{h-1}(p-1-j)-(j+2)$ with
$1\le j+2\le p$, which is equivalent to $i\le p^{h-1}(p-1-j)-1$. The number of
differentials obtained is
\[
  \sum_{j=0}^{p-2}p^{h-1}(p-1-j)=p^{h-1}\,\frac{p(p-1)}{2}=g .
\]
They are linearly independent because $1,y,\dots,y^{p-1}$ are linearly independent
over $k(x)$.
\end{proof}

\section{The Cartier expansion}\label{sec-expansion}

Fix $\omega_{i,j}=x^iy^j\,dx\in\cB_h$. Since $y=y^p-f(x)$,
\begin{equation}\label{eq-yj}
  y^j=\sum_{\tau=0}^{j}(-1)^{j-\tau}\binom{j}{\tau}(y^\tau)^p f(x)^{j-\tau}.
\end{equation}
Put $n=j-\tau$. For a multi-index $\bn=(n_0,\dots,n_h)\in\Z_{\ge0}^{h+1}$ with
$|\bn|=n_0+\dots+n_h=n$, the corresponding monomial of $f(x)^n$ has coefficient
$\binom{n}{n_0,\dots,n_h}\prod_{\nu}a_\nu^{n_\nu}$, and the corresponding monomial of
$x^if(x)^n$ has exponent
\begin{equation}\label{eq-m}
  \mu=i+\sum_{\nu=0}^{h}n_\nu(p^\nu+1)=i+n+\sum_{\nu=0}^{h}n_\nu p^\nu .
\end{equation}

\begin{remark}
The factor $x^i$ must be merged with the monomial of $f(x)^n$ \emph{before}
\eqref{eq-C2} is applied: by \eqref{eq-C1}, only $p$-th powers can be pulled through
$\Cart$. Thus the survival condition concerns the total exponent $\mu$ modulo $p$.
\end{remark}

Write $i=pL+c$ with $0\le c\le p-1$. Then $\mu\equiv c+n+n_0\pmod p$, and since
$c\le p-1$ and $n_0\le n\le j\le p-2$,
\[
  0\le c+n+n_0\le 3p-5 .
\]
Hence, by \eqref{eq-C2}, a term survives $\Cart$ only if
\begin{equation}\label{eq-survival}
  c+n+n_0=ep-1\qquad\text{for some }e\in\{1,2\}.
\end{equation}
In that case $\mu=p\big(L+e+\sum_{\nu\ge1}n_\nu p^{\nu-1}\big)-1$, and we obtain the
formula on which everything else rests.

\begin{proposition}[Cartier expansion]\label{prop-expansion}
With the notation above,
\[
  \Cart(\omega_{i,j})=\sum_{\tau=0}^{j}\ \sum_{\bn}
  (-1)^{j-\tau}\binom{j}{\tau}\binom{n}{n_0,\dots,n_h}
  \prod_{\nu=0}^{h}\alpha_\nu^{n_\nu}\ x^{u}y^{\tau}\,dx,
\]
where $\bn$ runs over the multi-indices with $|\bn|=n=j-\tau$ satisfying
\eqref{eq-survival}, and
\begin{equation}\label{eq-u}
  u=L+e-1+\sum_{\nu=1}^{h}n_\nu p^{\nu-1}.
\end{equation}
\end{proposition}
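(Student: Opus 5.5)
The plan is to compute $\Cart(\omega_{i,j})$ directly, using only the rules \eqref{eq-C1} and \eqref{eq-C2}. First I rewrite $y^j$ by \eqref{eq-yj}. This is legitimate because $y=y^p-f(x)$ is exactly the curve equation, so
\[
  y^j=(y^p-f)^j=\sum_{\tau}\binom{j}{\tau}(y^\tau)^p(-f)^{j-\tau}
\]
by the binomial theorem. Then
\[
  \omega_{i,j}=\sum_{\tau=0}^{j}(-1)^{j-\tau}\binom{j}{\tau}(y^\tau)^p\,x^if(x)^{n}\,dx,
  \qquad n=j-\tau.
\]
By additivity and $p^{-1}$-linearity \eqref{eq-C1}, the factor $(y^\tau)^p$ comes out as $y^\tau$, and the scalar coefficients come out as their $p$-th roots. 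The binomial and multinomial coefficients lie in $\F_p$, so they are fixed by this. The sign $(-1)^{j-\tau}$ is also fixed, since $p$ is odd.

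Next I expand $f(x)^n=\big(\sum_\nu a_\nu x^{p^\nu+1}\big)^n$ by the multinomial theorem. This writes $x^if(x)^n$ as a sum over $\bn$ with $|\bn|=n$ of
\[
  \binom{n}{n_0,\dots,n_h}\prod_\nu a_\nu^{n_\nu}\,x^{\mu},
\]
with $\mu$ as in \eqref{eq-m}. Pulling the scalar through $\Cart$ replaces $\prod a_\nu^{n_\nu}$ by $\prod\alpha_\nu^{n_\nu}$. It remains to apply \eqref{eq-C2} to $x^\mu\,dx$. Distinct $\bn$ may give the same $\mu$, but this is harmless because the sum is linear, so no collection of terms is needed.

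The only bookkeeping step is the survival condition and the exponent. I write $i=pL+c$ and use $\sum_{\nu\ge1}n_\nu p^\nu\equiv0$, which gives $\mu\equiv c+n+n_0\pmod p$. The bounds $0\le c+n+n_0\le 3p-5<3p-1$ mean that $\mu\equiv -1$ happens exactly when $c+n+n_0\in\{p-1,2p-1\}$, which is \eqref{eq-survival}. In that case
\[
  \mu+1=pL+ep+\sum_{\nu\ge1}n_\nu p^\nu,
\]
so $(\mu+1)/p-1=L+e-1+\sum_{\nu\ge1}n_\nu p^{\nu-1}=u$, which is \eqref{eq-u}. All other terms are killed by \eqref{eq-C2}.

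I expect no real obstacle. The one point needing care is the one flagged in the preceding remark: $x^i$ has to be merged with the monomial before applying \eqref{eq-C2}, since $x^i$ is not a $p$-th power. Besides that, one only checks that the signs and integer coefficients are unaffected by taking $p$-th roots.
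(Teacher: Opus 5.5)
Your proposal is correct and follows the paper's proof essentially verbatim: rewrite $y^j$ via $y=y^p-f(x)$, expand $f(x)^n$ by the multinomial theorem, pull $(y^\tau)^p$ and the scalars through $\Cart$ using \eqref{eq-C1} (integer coefficients in $\F_p$ are fixed), and apply \eqref{eq-C2} to each merged monomial $x^\mu\,dx$. Your derivation of \eqref{eq-survival} from $0\le c+n+n_0\le 3p-5$ and of the exponent $u$ in \eqref{eq-u} matches the paper's computation preceding the proposition.
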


\begin{proof}
Multiply \eqref{eq-yj} by $x^i\,dx$, expand $f(x)^n$ by the multinomial theorem, pull
$(y^\tau)^p$ out using \eqref{eq-C1}, and apply \eqref{eq-C2} to each monomial
$x^\mu\,dx$. Integer coefficients lie in $\F_p$ and are fixed by $p$-th roots.
\end{proof}

\begin{remark}
The set of monomials that can occur in $\Cart(\omega_{i,j})$ does not depend on $R$;
the coefficients of $R$ enter only through the weights $\prod\alpha_\nu^{n_\nu}$,
some of which vanish when a coefficient of $R$ is zero.
\end{remark}

\section{Small heights}\label{sec-small}

\subsection{The case \texorpdfstring{$h=0$}{h=0}}
Here $C_R\colon y^p-y=a_0x^2$ with $a_0\neq0$, and $g=M=(p-1)/2$. Now
$v_{P_\infty}(y)=-2$ and $v_{P_\infty}(dx)=p-3$, so $x^iy^j\,dx$ is regular if and
only if $pi+2j\le p-3$, and a basis of regular differentials is
$dx,\ y\,dx,\ \dots,\ y^{M-1}\,dx$.
For $0\le j\le M-1$, every power of $x$ occurring in the expansion \eqref{eq-yj} of
$y^j$ has exponent $2(j-\tau)\le 2j\le p-3$, so it is never $\equiv p-1\pmod p$.

\begin{proposition}\label{prop-h0}
For $h=0$, $\Cart=0$ on $H^0(C_R,\Omg)$; thus $\Jac(C_R)$ is superspecial and
$a(\Jac(C_R))=g=(p-1)/2$.
\end{proposition}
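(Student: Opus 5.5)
The statement follows almost immediately from the observations made just before it, so the plan is to verify those observations carefully and then apply \cref{def-cartier} termwise.

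First I will confirm the basis. For $h=0$ we have $d=2$, which is prime to $p$ since $p$ is odd, so \cref{prop-AS} applies. It gives $v_{P_\infty}(x)=-p$, $v_{P_\infty}(y)=-2$ and $g=(p-1)/2=M$, hence $v_{P_\infty}(dx)=2g-2=p-3$. The differential $x^iy^j\,dx$ is therefore regular exactly when $pi+2j\le p-3$. Since $p>p-3$, this forces $i=0$ and $j\le (p-3)/2=M-1$. This yields $M=g$ differentials $y^j\,dx$ with $0\le j\le M-1$. They are independent over $k(x)$, so they form a basis.

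Next I will compute $\Cart(y^j\,dx)$ using \eqref{eq-yj} with $f=a_0x^2$:
\[
y^j\,dx=\sum_{\tau=0}^{j}(-1)^{j-\tau}\binom{j}{\tau}a_0^{\,j-\tau}(y^\tau)^p x^{2(j-\tau)}\,dx .
\]
By \eqref{eq-C1}, $\Cart$ of each summand equals the scalar $\big((-1)^{j-\tau}\binom{j}{\tau}a_0^{j-\tau}\big)^{1/p}$ times $y^\tau\,\Cart(x^{2(j-\tau)}dx)$. Here the exponent satisfies $0\le 2(j-\tau)\le 2(M-1)=p-3<p-1$, so it is never congruent to $p-1$ modulo $p$. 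Hence \eqref{eq-C2} kills every term, and $\Cart(y^j\,dx)=0$ for every basis element. By additivity and $p^{-1}$-linearity, $\Cart=0$ on $H^0(C_R,\Omg)$.

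Finally, \cref{prop-a-number} gives $a(\Jac(C_R))=g-\rank\Cart=g=(p-1)/2$. The characterization $a=g$ in \cref{sec-prelim} then gives superspeciality.

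There is no real obstacle. The only points needing care are the bound $2j\le p-3$ and the remark that $x^i$ must be merged before \eqref{eq-C2} is applied; the latter is vacuous here because $i=0$.
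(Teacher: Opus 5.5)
Your proposal is correct and follows essentially the same argument as the paper. Both find the basis $dx, y\,dx,\dots,y^{M-1}\,dx$ from the condition $pi+2j\le p-3$, and both observe that every power of $x$ in the expansion \eqref{eq-yj} of $y^j$ has exponent $2(j-\tau)\le p-3$, so \eqref{eq-C2} kills every term.
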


\begin{remark}
The curve $y^p-y=a_0x^2$ is hyperelliptic. Ekedahl proved that a superspecial
hyperelliptic curve has genus at most $(p-1)/2$ \cite{Ekedahl1987}, so these curves
attain his bound.
\end{remark}

\subsection{The case \texorpdfstring{$h=1$}{h=1} and \texorpdfstring{$a_0=0$}{a0=0}}
For $h=1$, \cref{lem-basis} gives $\cB_1=\{x^iy^j\,dx:\ i,j\ge0,\ i+j\le p-2\}$.

\begin{proposition}\label{prop-h1a0}
If $h=1$ and $a_0=0$, then $\Cart=0$ and $a(\Jac(C_R))=g=p(p-1)/2$.
\end{proposition}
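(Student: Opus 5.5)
We show directly from the Cartier expansion (\cref{prop-expansion}) that every basis differential is killed by $\Cart$. For $h=1$ and $a_0=0$ we have $f(x)=a_1x^{p+1}$, so in any surviving multi-index $\bn=(n_0,n_1)$ the weight $\alpha_0^{n_0}$ vanishes unless $n_0=0$. We may therefore take $\bn=(0,n)$ with $n=j-\tau$.

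Take $\omega_{i,j}\in\cB_1$, so $i,j\ge0$ and $i+j\le p-2$. Since $i\le p-2$, we have $L=0$ and $c=i$. With $n_0=0$, the survival condition \eqref{eq-survival} reads
\[
  i+n=ep-1, \qquad e\in\{1,2\}.
\]
But $0\le i+n\le i+j\le p-2<p-1$, so no value of $e$ is possible.

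Hence every term in the expansion of $\Cart(\omega_{i,j})$ either has coefficient $\alpha_0^{n_0}=0$ (when $n_0\ge1$) or does not survive $\Cart$ (when $n_0=0$). Equivalently, by \eqref{eq-C2}, every monomial $x^\mu$ coming from $x^i f(x)^n$ has exponent
\[
  \mu=i+n(p+1)\equiv i+n \pmod p,
\]
and $i+n\le p-2$, so $\mu\not\equiv p-1\pmod p$. Thus $\Cart(\omega_{i,j})=0$ for every basis element. By \cref{lem-basis} and linearity, $\Cart=0$ on $H^0(C_R,\Omg)$.

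By \cref{prop-a-number}, $a(\Jac(C_R))=g-\rank(\Cart)=g=p(p-1)/2$.

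The only point needing care is the bound $i+j\le p-2$ from \cref{lem-basis}, which is exactly what excludes the residue $p-1$. There is no real obstacle beyond this.
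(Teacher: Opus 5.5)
Your proof is correct and matches the paper's: the paper also notes that every $x$-exponent is $i+n(p+1)\equiv i+n\pmod p$ with $0\le i+n\le i+j\le p-2$, so no term reaches residue $p-1$. Routing this through the survival condition of \cref{prop-expansion} with $n_0=0$ is the same observation.
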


\begin{proof}
Here $f(x)=a_1x^{p+1}$, so in the term of \eqref{eq-yj} with $n=j-\tau$ the exponent
of $x$ is $i+n(p+1)\equiv i+n\pmod p$. Survival would require
$i+n\equiv p-1\pmod p$, which is impossible because $0\le i+n\le i+j\le p-2$.
\end{proof}

\begin{remark}\label{rem-hermitian}
Over $k$, the curve $y^p-y=a_1x^{p+1}$ is isomorphic to the Hermitian curve
$Y^p+Y=X^{p+1}$ (put $y=\zeta Y$ with $\zeta^{p-1}=-1$ and rescale $x$). Its genus
$p(p-1)/2$ is the largest genus of a superspecial curve \cite{Ekedahl1987}, and its
Ekedahl--Oort type is determined in \cite{PriesWeir2015}.
The case $h=1$, $a_0\ne0$ is the case $h=1$ of \cref{thm-B}, proved in
\cref{sec-last}.
\end{remark}

\section{The first iterate for \texorpdfstring{$h\ge2$}{h at least 2}}\label{sec-first}

In this section $h\ge2$. We determine $\Img(\Cart)$; this is also the base case of
the induction in \cref{sec-higher}.

\subsection{A universal image space}
Define
\begin{equation}\label{eq-Uh}
  \cU_h=\Span_k\left\{x^{p^{h-1}t+z}y^\tau\,dx\ :\
  \begin{array}{l}\tau,t\ge0,\ \ \tau+t\le p-2,\\[2pt]
  0\le z\le p^{h-2}(p-1-\tau-t)-1\end{array}\right\}.
\end{equation}

\begin{lemma}\label{lem-Uh-basis}
The differentials displayed in \eqref{eq-Uh} are pairwise distinct elements of
$\cB_h$; in particular they form a basis of $\cU_h$.
\end{lemma}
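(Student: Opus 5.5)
The plan is to check directly that each displayed differential is some $\omega_{i,j}$ in $\cB_h$, and then that distinct parameter triples give distinct monomials.

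For a triple $(\tau,t,z)$ as in \eqref{eq-Uh}, the differential is $x^iy^j\,dx$ with
\[
i=p^{h-1}t+z,\qquad j=\tau .
\]
Since $\tau+t\le p-2$ and $t\ge0$, we get $0\le j\le p-2$. It remains to bound $i$. From $z\le p^{h-2}(p-1-\tau-t)-1$ we obtain
\[
i\le p^{h-1}t+p^{h-2}(p-1-\tau-t)-1 .
\]
This should be at most $p^{h-1}(p-1-\tau)-1$, the bound required by \cref{lem-basis}. Subtracting, the inequality is equivalent to
\[
p^{h-2}(p-1-\tau-t)\le p^{h-1}(p-1-\tau-t),
\]
which holds because $p-1-\tau-t\ge1$ and $h\ge2$. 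The range of $z$ is nonempty for the same reason, since $p^{h-2}(p-1-\tau-t)\ge1$. Hence every displayed differential lies in $\cB_h$.

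For distinctness, suppose $(\tau,t,z)$ and $(\tau',t',z')$ give the same monomial. Equal $y$-exponents force $\tau=\tau'$. Equal $x$-exponents give $p^{h-1}t+z=p^{h-1}t'+z'$. The key estimate is
\[
z\le p^{h-2}(p-1)-1<p^{h-1},
\]
so $z$ is the remainder and $t$ the quotient of $i$ upon division by $p^{h-1}$. Therefore $t=t'$ and $z=z'$.

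Distinct elements of the basis $\cB_h$ are linearly independent, so the displayed differentials form a basis of their span $\cU_h$. There is no real obstacle here. The only point needing care is the uniqueness of division by $p^{h-1}$, which rests on the bound $z<p^{h-1}$, itself a consequence of $\tau+t\ge0$.
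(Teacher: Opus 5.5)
Your proposal is correct and follows the paper's proof essentially step for step: you bound $i=p^{h-1}t+z$ by $p^{h-1}(p-1-\tau)-1$ to land in $\cB_h$, and you recover $(t,z)$ as quotient and remainder modulo $p^{h-1}$ using $z\le p^{h-2}(p-1)-1<p^{h-1}$, with $\tau=j$. Linear independence is then inherited from $\cB_h$, exactly as in the paper.
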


\begin{proof}
For $x^{p^{h-1}t+z}y^\tau\,dx$ we have $i=p^{h-1}t+z$ and $j=\tau\le p-2$. Since
$z\le p^{h-2}(p-1-\tau-t)-1\le p^{h-1}(p-1-\tau-t)-1$, we get
$i\le p^{h-1}(p-1-\tau)-1$, which is the bound of \cref{lem-basis}. Moreover
$z\le p^{h-2}(p-1)-1<p^{h-1}$, so $t$ and $z$ are the quotient and the remainder of
$i$ upon division by $p^{h-1}$; together with $\tau=j$ this shows that the triple
$(\tau,t,z)$ is determined by the differential.
\end{proof}

\begin{lemma}\label{lem-image}
$\Cart\big(H^0(C_R,\Omg)\big)\subseteq\cU_h$.
\end{lemma}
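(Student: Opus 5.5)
The plan is to argue monomial by monomial, using \cref{prop-expansion}. Since $\Cart$ is additive and $\cB_h$ is a basis of $H^0(C_R,\Omg)$ (\cref{lem-basis}), it suffices to show $\Cart(\omega_{i,j})\in\cU_h$ for every $\omega_{i,j}\in\cB_h$. By \cref{prop-expansion}, $\Cart(\omega_{i,j})$ is a $k$-linear combination of differentials $x^uy^\tau\,dx$. Each of these comes from a multi-index $\bn$ with $|\bn|=n=j-\tau$ satisfying \eqref{eq-survival}, and $u$ is given by \eqref{eq-u}. So I will show that every such $x^uy^\tau\,dx$ is one of the generators displayed in \eqref{eq-Uh}; no cancellation or coefficient analysis is needed.

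For a given term I define
\[
t=n_h,\qquad z=L+e-1+\sum_{\nu=1}^{h-1}n_\nu p^{\nu-1},
\]
so that $u=p^{h-1}t+z$ by \eqref{eq-u}. Clearly $\tau,t,z\ge0$. The first condition in \eqref{eq-Uh} is immediate: $\tau+t=\tau+n_h\le\tau+n=j\le p-2$.

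The main step is the bound $z\le p^{h-2}(p-1-\tau-t)-1$. Since $p^{\nu-1}\le p^{h-2}$ for $1\le\nu\le h-1$, we have
\[
z\le L+e-1+p^{h-2}(n-n_0-n_h).
\]
Moreover $p-1-\tau-t=p-1-j+n-n_h$. Hence it suffices to prove
\[
L+e\le p^{h-2}(p-1-j)+p^{h-2}n_0 .
\]
From $pL+c=i\le p^{h-1}(p-1-j)-1$ and $c\ge0$ we get $L<p^{h-2}(p-1-j)$. Since $L$ is an integer, this gives $L\le p^{h-2}(p-1-j)-1$.

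I then split according to the value of $e$ in \eqref{eq-survival}.
\begin{itemize}
\item If $e=1$, the required inequality follows at once from the bound on $L$.
\item If $e=2$, then $c+n+n_0=2p-1$. Using $c\le p-1$ and $n\le p-2$, this forces $n_0\ge1$, so $p^{h-2}n_0\ge1$ (here $h\ge2$ is used), which absorbs the extra $1$.
\end{itemize}

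In both cases the bound on $z$ holds, so $x^uy^\tau\,dx$ is a generator of $\cU_h$, and the lemma follows. The only delicate point is this $e=2$ case, where the carry in \eqref{eq-survival} must be compensated by a nonzero $n_0$.
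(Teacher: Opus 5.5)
Your proposal is correct and follows the paper's proof essentially step for step. It uses the same decomposition $u=p^{h-1}t+z$ with $t=n_h$, the same bound $L\le p^{h-2}(p-1-j)-1$, and the same case split on $e$, where for $e=2$ the extra $1$ is absorbed by $p^{h-2}n_0\ge1$. The paper observes that in fact $n_0\ge2$ there, but your weaker bound $n_0\ge1$ suffices.
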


\begin{proof}
Consider a surviving term of $\Cart(\omega_{i,j})$ given by $\tau$ and $\bn$, with
$|\bn|=n=j-\tau$, $i=pL+c$ and $c+n+n_0=ep-1$. By \eqref{eq-u} its exponent is
$u=p^{h-1}t+z$ with
\[
  t=n_h,\qquad z=L+e-1+\sum_{\nu=1}^{h-1}n_\nu p^{\nu-1}.
\]
By \cref{lem-basis}, $i\le p^{h-1}(p-1-j)-1$, hence $L\le p^{h-2}(p-1-j)-1$. Also
$\sum_{\nu=1}^{h-1}n_\nu p^{\nu-1}\le p^{h-2}\sum_{\nu=1}^{h-1}n_\nu=p^{h-2}(n-t-n_0)$.
Since $j=\tau+n$,
\[
  z\le p^{h-2}(p-1-\tau-t-n_0)-1+(e-1).
\]
If $e=1$ this is at most $p^{h-2}(p-1-\tau-t)-1$. If $e=2$, then $c+n+n_0=2p-1$ and
$c+n\le 2p-3$ force $n_0\ge2$, so $e-1=1\le p^{h-2}n_0$ and the same bound follows.
Finally $\tau+t\le\tau+n=j\le p-2$.
\end{proof}

\begin{proposition}\label{prop-dimU}
$\dim\cU_h=p^{h-1}(p^2-1)/6$. Consequently $\rank(\Cart)\le p^{h-1}(p^2-1)/6$.
\end{proposition}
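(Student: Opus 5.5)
The plan is a direct count followed by an appeal to \cref{lem-image}. By \cref{lem-Uh-basis}, the differentials displayed in \eqref{eq-Uh} form a basis of $\cU_h$. So $\dim\cU_h$ is the number of triples $(\tau,t,z)$ allowed there. For fixed $(\tau,t)$ with $\tau,t\ge0$ and $\tau+t\le p-2$, the index $z$ takes exactly $p^{h-2}(p-1-\tau-t)$ values, and this number is positive. Grouping by $s=\tau+t\in\{0,\dots,p-2\}$, there are $s+1$ pairs with $\tau+t=s$. Hence
\[
  \dim\cU_h=p^{h-2}\sum_{s=0}^{p-2}(s+1)(p-1-s).
\]

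Next I evaluate the sum. Substituting $m=s+1$ it becomes $\sum_{m=1}^{p-1}m(p-m)$. This equals $p\cdot\frac{(p-1)p}{2}-\frac{(p-1)p(2p-1)}{6}$, which simplifies to $\frac{p(p-1)(p+1)}{6}=\binom{p+1}{3}$. Multiplying by $p^{h-2}$ gives $\dim\cU_h=p^{h-1}(p^2-1)/6$.

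For the consequence, \cref{lem-image} gives $\Img(\Cart)\subseteq\cU_h$. Here $\Img(\Cart)$ is a $k$-subspace by \cref{rem-semilinear}. Therefore $\rank(\Cart)=\dim_k\Img(\Cart)\le\dim\cU_h$.

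There is no real obstacle. The only points needing care are that the count for each fixed $(\tau,t)$ is nonnegative, so no terms are dropped, and that distinct triples give distinct basis vectors. The latter is already \cref{lem-Uh-basis}.
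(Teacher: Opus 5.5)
Your proposal is correct and follows the paper's own proof: you count the basis triples from \cref{lem-Uh-basis} grouped by $s=\tau+t$, evaluate $p^{h-2}\sum_{s=0}^{p-2}(s+1)(p-1-s)=p^{h-1}(p^2-1)/6$, and deduce the rank bound from the containment $\Img(\Cart)\subseteq\cU_h$ of \cref{lem-image}. Your explicit evaluation of $\sum_{m=1}^{p-1}m(p-m)$ and your remark that $\Img(\Cart)$ is a $k$-subspace are correct and fill in details the paper leaves implicit.
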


\begin{proof}
Grouping the pairs $(\tau,t)$ by $s=\tau+t$,
\[
  \dim\cU_h=p^{h-2}\sum_{\tau+t\le p-2}(p-1-\tau-t)
  =p^{h-2}\sum_{s=0}^{p-2}(s+1)(p-1-s)=p^{h-2}\cdot\frac{p(p^2-1)}{6}. \qedhere
\]
\end{proof}

\subsection{A triangular minor}
For each triple $(\tau,t,z)$ occurring in \eqref{eq-Uh} define
\begin{equation}\label{eq-eta}
  \eta_{\tau,t,z}=x^{pz+p-1-t}\,y^{\tau+t}\,dx .
\end{equation}

\begin{lemma}\label{lem-eta-regular}
The differentials $\eta_{\tau,t,z}$ are pairwise distinct elements of $\cB_h$.
\end{lemma}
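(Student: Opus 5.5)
The proof reduces to checking the two defining conditions of $\cB_h$ from \cref{lem-basis}, followed by an injectivity argument. Write $\eta_{\tau,t,z}=x^{i'}y^{j'}\,dx$ with $i'=pz+p-1-t$ and $j'=\tau+t$.

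First, the exponent of $y$. The constraint $\tau+t\le p-2$ in \eqref{eq-Uh} gives $0\le j'\le p-2$ directly.

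Second, the exponent of $x$. Since $t\le p-2$, we have $p-1-t\ge1$, so $i'\ge0$. For the upper bound, we need $i'\le p^{h-1}(p-1-j')-1$. Using $z\le p^{h-2}(p-1-\tau-t)-1$, we get
\[
  i'\le p\big(p^{h-2}(p-1-j')-1\big)+p-1-t=p^{h-1}(p-1-j')-1-t,
\]
which is at most the required bound because $t\ge0$. This is exactly where the factor $p^{h-2}$ in the definition of $\cU_h$ is used, and $h\ge2$ guarantees that it is an integer.

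Third, distinctness. Suppose $\eta_{\tau,t,z}=\eta_{\tau',t',z'}$. Reducing the $x$-exponent modulo $p$ gives $p-1-t\equiv p-1-t'\pmod p$. Both $p-1-t$ and $p-1-t'$ lie in $\{1,\dots,p-1\}$, so $t=t'$. Comparing the full $x$-exponents then gives $z=z'$. Finally, comparing the $y$-exponents gives $\tau+t=\tau'+t'$, hence $\tau=\tau'$. Since distinct monomials $x^iy^j\,dx$ with $j\le p-2$ are distinct elements of $\cB_h$, the $\eta_{\tau,t,z}$ are pairwise distinct.

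No step here is a real obstacle. The only point requiring care is the upper bound on $i'$, where the slack term $-t$ makes the inequality come out.
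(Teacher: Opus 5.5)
Your proof is correct and follows the paper's argument: you bound $pz+p-1-t$ by $p^{h-1}(p-1-(\tau+t))-1$ using the constraint on $z$, and you recover $t$ from the residue modulo $p$, then $z$ from the $x$-exponent and $\tau$ from $j=\tau+t$. Your explicit checks that $j\le p-2$ and $i\ge 0$ are details the paper leaves implicit.
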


\begin{proof}
From $z\le p^{h-2}(p-1-\tau-t)-1$ we get
$pz+p-1-t\le p^{h-1}(p-1-(\tau+t))-1$, which is the bound of \cref{lem-basis} for
$j=\tau+t$. Since $0\le p-1-t\le p-1$, the remainder of $i=pz+p-1-t$ modulo $p$
determines $t$, the quotient determines $z$, and then $\tau=j-t$.
\end{proof}

By \cref{lem-Uh-basis,lem-eta-regular}, both families are indexed by the same set of
triples $(\tau,t,z)$. Consider the square submatrix of the Cartier--Manin matrix whose
row $(\tau',t',z')$ corresponds to the basis element $x^{p^{h-1}t'+z'}y^{\tau'}dx$ of
$\cU_h$ and whose column $(\tau,t,z)$ corresponds to $\eta_{\tau,t,z}$. By
\cref{lem-image}, these rows contain the whole image of each selected column.

\begin{lemma}\label{lem-triangular}
\begin{enumerate}[label=\textup{(\roman*)}, leftmargin=2em]
  \item The entry in row $(\tau',t',z')$ and column $(\tau,t,z)$ vanishes unless
    $\tau'+t'\le\tau+t$.
  \item If $\tau'+t'=\tau+t$, the entry vanishes unless $(\tau',t',z')=(\tau,t,z)$,
    and the diagonal entry equals $(-1)^t\binom{\tau+t}{\tau}\alpha_h^{\,t}\neq0$.
\end{enumerate}
Consequently the selected square submatrix is invertible.
\end{lemma}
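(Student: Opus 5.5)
We apply \cref{prop-expansion} to the column $\eta_{\tau,t,z}=\omega_{i,j}$ with $i=pz+p-1-t$ and $j=\tau+t$. Then $L=z$ and $c=p-1-t$. A surviving term is labelled by $\tau'$ and $\bn$ with $|\bn|=n=j-\tau'$. By the proof of \cref{lem-image}, this term lands in the row $(\tau',t',z')$ with
\[
  t'=n_h,\qquad z'=z+e-1+\sum_{\nu=1}^{h-1}n_\nu p^{\nu-1}.
\]

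For part (i), note that $t'=n_h\le n=j-\tau'$. Hence $\tau'+t'\le\tau'+n=j=\tau+t$, so every nonzero entry of the column lies in rows with $\tau'+t'\le\tau+t$.

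For part (ii), assume $\tau'+t'=\tau+t$. Then $n_h=n$, which forces $n_0=\dots=n_{h-1}=0$. The survival condition \eqref{eq-survival} becomes $c+n=p-1-t+n=ep-1$. Since $0\le n\le p-2$ and $0\le t\le p-2$, we have $p-1-t+n\le 2p-3$, so $e=1$ and $n=t$. This gives $\tau'=j-n=\tau$ and $t'=t$. With $e=1$ and $n_\nu=0$ for $1\le\nu\le h-1$, the formula gives $z'=z$. So only the diagonal entry survives in this block. It comes from the single term $\tau'=\tau$, $\bn=(0,\dots,0,t)$, whose coefficient is $(-1)^{t}\binom{\tau+t}{\tau}\binom{t}{t}\alpha_h^{t}$. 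This is nonzero by \cref{lem-binom-nonzero} (as $\tau+t\le p-2$) and because $a_h\ne0$.

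Finally, group rows and columns by $s=\tau+t$, ordered by $\le$. Part (i) says the block with row group $s'$ and column group $s$ vanishes when $s<s'$. Part (ii) says each diagonal block is diagonal with nonzero entries, since rows and columns in the same group with the same triple coincide. Then \cref{lem-blocktri}(iii) gives invertibility.

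The main care point is that distinct $\bn$ (or distinct $\tau'$) could contribute to the same monomial and create cancellation. In the diagonal block this cannot happen, because we showed only one term lands there. Off-diagonal cancellations do not matter for the argument.
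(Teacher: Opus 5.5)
Your proof is correct and follows the paper's argument essentially step by step. You read off the landing row $t'=n_h$ from the proof of \cref{lem-image}, use $n_h\le n$ for (i), force $\bn=(0,\dots,0,t)$ with $e=1$ for (ii), and then apply \cref{lem-blocktri}(iii) after grouping by $\tau+t$. The only cosmetic difference is in (ii): you get $e=1$ and $n=t$ directly from the bound $p-1-t+n\le 2p-3$, whereas the paper reduces the survival condition to $\tau\equiv\tau'\pmod p$ and then uses $0\le\tau,\tau'\le p-2$.
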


\begin{proof}
Fix the column $\eta_{\tau,t,z}$, whose $y$-exponent is $j=\tau+t$. A surviving term
has output $y$-exponent $\tau'$, and we put $n'=j-\tau'$. By the proof of
\cref{lem-image} its output exponent is $p^{h-1}n_h+z'$ with $0\le z'<p^{h-1}$, so it
lands in the row with $t'=n_h\le n'$. Hence $\tau'+t'\le\tau'+n'=j=\tau+t$, which is
(i). Suppose equality holds. Then $n_h=n'$ and $n_0=\dots=n_{h-1}=0$. For
$i=pz+p-1-t$ we have $c=p-1-t$, and the survival condition becomes
$p-1-t+n'\equiv p-1\pmod p$, that is, $\tau\equiv\tau'\pmod p$.
Since $0\le\tau,\tau'\le p-2$, this forces $\tau'=\tau$, hence $n'=t'=t$, $e=1$, and
the output exponent is
\[
  \frac{(pz+p-1-t)+t(p^h+1)-(p-1)}{p}=z+p^{h-1}t .
\]
So the only term of this column with $\tau'+t'=\tau+t$ lands in the row $(\tau,t,z)$,
with coefficient $(-1)^t\binom{\tau+t}{\tau}\alpha_h^{\,t}$, which is nonzero by
\cref{lem-binom-nonzero} (as $\tau+t\le p-2$) and $a_h\neq0$. This proves (ii).

For the last assertion, group rows and columns by $J=\tau+t\in\{0,\dots,p-2\}$. By
(i) the block with row group $J'$ and column group $J$ vanishes when $J'>J$, and by
(ii) each diagonal block is a diagonal matrix with nonzero diagonal entries. Now
\cref{lem-blocktri}(iii) applies.
\end{proof}

\begin{figure}[ht]
\centering
\begin{tikzpicture}[scale=0.62]
  \def\b{{0,0.75,2.0,3.5,5.0,6.25,7.0}}
  \fill[accent!10] (0,0) rectangle (7,7);
  \foreach \J in {0,...,5}{
    \pgfmathsetmacro{\xa}{\b[\J]}
    \pgfmathsetmacro{\xb}{\b[\J+1]}
    \fill[white] (\xa,0) rectangle (\xb,7-\xb);
    \fill[white] (\xa,7-\xa) rectangle (\xb,7-\xb);
    \draw[accent!60] (\xa,7-\xa) rectangle (\xb,7-\xb);
    \draw[accent,very thick] (\xa,7-\xa) -- (\xb,7-\xb);
  }
  \draw[accent] (0,0) rectangle (7,7);
  \node at (5.2,5.4) {\Large $\ast$};
  \node at (1.6,1.6) {\Large $0$};
  \node[font=\scriptsize] at (0.375,7.35) {$J{=}0$};
  \node[font=\scriptsize] at (6.6,7.35) {$J{=}p{-}2$};
  \draw[decorate,decoration={brace,amplitude=4pt}] (7.3,7) -- (7.3,0)
     node[midway,right=6pt,align=left,font=\small]
     {rows: basis of $\cU_h$,\\ grouped by $J'=\tau'+t'$};
  \node[below,font=\small,align=center] at (3.5,-0.25)
     {columns: $\eta_{\tau,t,z}$, grouped by $J=\tau+t$};
\end{tikzpicture}
\caption{Schematic shape of the minor in \cref{lem-triangular} (drawn for $p=7$; block
$J$ has size $(J+1)p^{h-2}(p-1-J)$). Entries occur only in row groups $J'\le J$, and
each diagonal block is itself diagonal, with entries
$(-1)^t\binom{\tau+t}{\tau}\alpha_h^{t}\neq0$.}
\label{fig-triangular}
\end{figure}

\begin{theorem}\label{thm-hge2}
For every $h\ge2$, $\Img(\Cart)=\cU_h$. Hence
\[
  \rank(\Cart)=\frac{p^{h-1}(p^2-1)}{6}\qquad\text{and}\qquad
  a(\Jac(C_R))=\frac{p^{h-1}(p-1)(2p-1)}{6}.
\]
\end{theorem}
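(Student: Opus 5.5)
The plan is to sandwich $\Img(\Cart)$ between $\cU_h$ from above and a subspace of dimension $\dim\cU_h$ from below.

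For the upper bound, \cref{lem-image} already gives $\Img(\Cart)\subseteq\cU_h$. By \cref{lem-Uh-basis,prop-dimU}, $\cU_h$ has dimension $p^{h-1}(p^2-1)/6$, so it remains to show $\rank(\Cart)\ge p^{h-1}(p^2-1)/6$.

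For the lower bound, consider the differentials $\eta_{\tau,t,z}$ of \eqref{eq-eta}. By \cref{lem-eta-regular} they are distinct elements of $\cB_h$, indexed by the same triples as the basis of $\cU_h$. Expand each $\Cart(\eta_{\tau,t,z})$ in the basis $\cB_h$. Since the image lies in $\cU_h$ (\cref{lem-image}), only the coordinates along the basis of $\cU_h$ can be nonzero. Hence the coordinate vectors of the $\Cart(\eta_{\tau,t,z})$ are exactly the columns of the square submatrix of \cref{lem-triangular}, padded with zeros. By \cref{lem-triangular} that submatrix is invertible. So the vectors $\Cart(\eta_{\tau,t,z})$ are linearly independent, and their number is $\dim\cU_h$.

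The semilinearity point needs care. $\Cart$ is only $p^{-1}$-linear, but $\Img(\Cart)$ is still a $k$-subspace (\cref{rem-semilinear}). It contains these $\dim\cU_h$ independent vectors, so $\dim\Img(\Cart)\ge\dim\cU_h$. Equivalently, $\rank(\Cart)$ is the rank of $\mathsf M$, and $\mathsf M$ contains the invertible minor. Combining the two bounds gives $\Img(\Cart)=\cU_h$ and $\rank(\Cart)=p^{h-1}(p^2-1)/6$.

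Finally, \cref{prop-a-number} gives
\[
a(\Jac(C_R))=g-\rank(\Cart)=\frac{p^h(p-1)}{2}-\frac{p^{h-1}(p^2-1)}{6}=\frac{p^{h-1}(p-1)(3p-(p+1))}{6},
\]
and $3p-(p+1)=2p-1$, which is the stated formula.

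There is no real obstacle, since the hard work is in \cref{lem-image,lem-triangular}. The only point needing attention is that the selected rows capture the entire image of the selected columns, which is exactly what \cref{lem-image} provides.
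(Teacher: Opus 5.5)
Your proposal is correct and follows the paper's proof exactly: the containment $\Img(\Cart)\subseteq\cU_h$ from \cref{lem-image} together with \cref{prop-dimU}, the invertible minor of \cref{lem-triangular} for the matching lower bound, and then $a(\Jac(C_R))=g-\rank(\Cart)$. Your remark on semilinearity is a correct elaboration of a point the paper leaves implicit: $\Img(\Cart)$ is a $k$-subspace, so the linear independence of the selected images suffices.
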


\begin{proof}
\Cref{lem-image,prop-dimU} give $\Img(\Cart)\subseteq\cU_h$, and \cref{lem-triangular}
provides an invertible square submatrix of size $\dim\cU_h$, so
$\rank(\Cart)\ge\dim\cU_h$. Hence $\Img(\Cart)=\cU_h$, and
\[
  a(\Jac(C_R))=g-\rank(\Cart)=\frac{p^h(p-1)}{2}-\frac{p^{h-1}(p^2-1)}{6}
  =\frac{p^{h-1}(p-1)(2p-1)}{6}. \qedhere
\]
\end{proof}

\begin{remark}\label{rem-lower-coeff}
Only $a_h\neq0$ is used in the proof. The lower coefficients create additional
entries of the Cartier--Manin matrix, but in the minor of \cref{lem-triangular} they
lie strictly above the diagonal blocks, so they cannot lower the rank, and
\cref{lem-image} prevents them from raising it. As \cref{thm-B} shows, $a_0$ becomes
relevant exactly at the last iterate $\Cart^h$; for $h=1$ that iterate is $\Cart$
itself.
\end{remark}

\begin{remark}
For $h\ge2$ one has $a(\Jac(C_R))/g(C_R)=(2p-1)/(3p)$. This ratio does not depend on
$h$, equals $5/9$ for $p=3$, and increases to $2/3$ as $p\to\infty$. Thus the
Jacobian is never superspecial, but its $a$-number always exceeds $g/2$.
\end{remark}

\begin{remark}[The monomial member]\label{rem-monomial}
If $a_0=\dots=a_{h-1}=0$, only $\bn=(0,\dots,0,n)$ occurs in
\cref{prop-expansion}, and \eqref{eq-survival} reads $c+n=p-1$ because
$c+n\le 2p-3$. Writing $n_c=p-1-c$, we obtain
\[
  \Cart(\omega_{i,j})=
  \begin{cases}
    (-1)^{n_c}\binom{j}{n_c}\alpha_h^{\,n_c}\ \omega_{L+n_cp^{h-1},\,j-n_c}, & n_c\le j,\\[3pt]
    0, & n_c>j.
  \end{cases}
\]
For $h\ge2$ one has $L<p^{h-1}$, so distinct basis elements with nonzero image have
distinct images, and the Cartier--Manin matrix is a weighted partial permutation
matrix; the same then holds for every $\Cart^r$.
When the lower-order terms $a_\nu x^{p^\nu+1}$, $\nu<h$, are switched on, many new
entries appear; \cref{thm-hge2} and \cref{thm-A} say that they do not change the
ranks of $\Cart^r$ for $r\le h-1$.
\end{remark}

\section{Higher iterates: proof of \texorpdfstring{\cref{thm-A}}{Theorem A}}\label{sec-higher}

In this section $h\ge2$. For $1\le r\le h-1$ put $m=h-r-1\ge0$ and define
\begin{equation}\label{eq-Uhr}
  \cU_h^{(r)}=\Span_k\left\{
  x^{\,v+p^mb+\sum_{\ell=1}^{r}p^{m+\ell}t_\ell}\,y^s\,dx\ :\
  \begin{array}{l}0\le v<p^m,\quad b,t_1,\dots,t_r,s\ge0,\\[2pt]
  b+t_1+\dots+t_r+s\le p-2\end{array}\right\}.
\end{equation}
For $r=1$ (so $m=h-2$) this is the space $\cU_h$ of \eqref{eq-Uh}: the correspondence
is $t=t_1$, $\tau=s$ and $z=v+p^{h-2}b$, which identifies the pairs $(v,b)$ with
$0\le v<p^{h-2}$, $0\le b\le p-2-\tau-t$ with the integers
$0\le z\le p^{h-2}(p-1-\tau-t)-1$.

\begin{lemma}\label{lem-Uhr-basis}
Let $1\le r\le h-1$ and $m=h-r-1$.
\begin{enumerate}[label=\textup{(\roman*)}, leftmargin=2em]
  \item Every differential displayed in \eqref{eq-Uhr} belongs to $\cB_h$.
  \item Distinct parameter tuples $(v,b,t_1,\dots,t_r,s)$ give distinct elements of
    $\cB_h$.
\end{enumerate}
Consequently these differentials form a basis of $\cU_h^{(r)}$, and
\begin{equation}\label{eq-dimUhr}
  \dim\cU_h^{(r)}=p^{\,h-r-1}\binom{p+r}{r+2}.
\end{equation}
\end{lemma}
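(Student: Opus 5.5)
For (i), fix a tuple $(v,b,t_1,\dots,t_r,s)$ and put $i=v+p^mb+\sum_{\ell=1}^{r}p^{m+\ell}t_\ell$ and $T=b+t_1+\dots+t_r$. We have $s\le p-2$. By \cref{lem-basis} it suffices to show $i\le p^{h-1}(p-1-s)-1$. Since $v\le p^m-1$ and $p^{m+\ell}\le p^{m+r}=p^{h-1}$, we get
\[
  i\le p^m-1+p^{h-1}T\le p^{h-1}(T+1)-1 .
\]
This is at most $p^{h-1}(p-1-s)-1$ because $T+s\le p-2$.

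For (ii), I would read off the parameters from $(i,s)$. Note that $v<p^m$, that $b\le p-2<p$, and that each $t_\ell\le p-2<p$. Hence $v+p^mb+\sum_\ell p^{m+\ell}t_\ell$ is, up to regrouping, a base-$p$ expansion of $i$. Concretely, $v$ is the residue of $i$ modulo $p^m$, $b$ is the next base-$p$ digit, and $t_\ell$ is the digit in position $m+\ell$. The exponent $s$ is the $y$-exponent. So the map from tuples to differentials is injective. Linear independence then follows from \cref{lem-basis}, and the displayed differentials form a basis.

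For the dimension, the count factors as $p^m$ choices of $v$ times the number of $(b,t_1,\dots,t_r,s)\in\Z_{\ge0}^{r+2}$ with sum at most $p-2$. By \cref{lem-stars} with $n=r+2$ and $K=p-2$, the latter number is $\binom{p-2+r+2}{r+2}=\binom{p+r}{r+2}$. This gives \eqref{eq-dimUhr}.

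As a check, for $r=1$ this recovers $p^{h-2}\binom{p+1}{3}=p^{h-1}(p^2-1)/6$, in agreement with \cref{prop-dimU}. There is no real obstacle here; the only point needing care is the digit argument in (ii), namely that every digit is strictly less than $p$ and that the top position $m+r=h-1$ does not overflow.
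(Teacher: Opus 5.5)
Your proposal is correct and follows essentially the same argument as the paper. You bound $i$ using $v<p^m\le p^{h-1}$ and $T+s\le p-2$, identify $(v,b,t_1,\dots,t_r)$ via the base-$p$ expansion of $i$ (all digits at most $p-2$), and count the tuples with \cref{lem-stars} for $n=r+2$ and $K=p-2$.
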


\begin{proof}
(i) The differential has $i=v+p^mb+\sum_{\ell=1}^{r}p^{m+\ell}t_\ell$ and
$j=s\le p-2$. Since $v\le p^m-1$ and every weight $p^m,\dots,p^{m+r}$ is at most
$p^{m+r}=p^{h-1}$,
\[
  i\le p^m-1+p^{h-1}(b+t_1+\dots+t_r)\le p^m-1+p^{h-1}(p-2-s)
  \le p^{h-1}(p-1-s)-1,
\]
where the last step uses $p^m\le p^{h-1}$. This is the bound of \cref{lem-basis}.

(ii) The integer $v$ is the remainder of $i$ modulo $p^m$, and
$(i-v)/p^m=b+t_1p+t_2p^2+\dots+t_rp^r$ with all coefficients at most $p-2$. This is
the base-$p$ expansion of $(i-v)/p^m$, so $b,t_1,\dots,t_r$ are determined by $i$,
and $s=j$.

For the dimension, $v$ takes $p^m=p^{h-r-1}$ values, and by \cref{lem-stars} the
$r+2$ variables $b,t_1,\dots,t_r,s$ with sum at most $p-2$ admit $\binom{p+r}{r+2}$
choices.
\end{proof}

\begin{theorem}\label{thm-higher}
Let $h\ge2$ and $1\le r\le h-1$. Then $\Img(\Cart^r)=\cU_h^{(r)}$, and therefore
$\rank(\Cart^r)=p^{\,h-r-1}\binom{p+r}{r+2}$.
\end{theorem}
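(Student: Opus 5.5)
We argue by induction on $r$, using $\Img(\Cart^{r+1})=\Cart(\Img\Cart^r)$ from \cref{rem-semilinear}. The base case $r=1$ is \cref{thm-hge2}, since $\cU_h^{(1)}=\cU_h$. For the inductive step, assume $\Img(\Cart^r)=\cU_h^{(r)}$ with $r+1\le h-1$, and put $m=h-r-1\ge1$. We must show $\Cart(\cU_h^{(r)})=\cU_h^{(r+1)}$. Once this holds, the rank formula follows from \eqref{eq-dimUhr} in \cref{lem-Uhr-basis}. The proof has two halves, an upper inclusion and a triangular minor, mirroring \cref{lem-image} and \cref{lem-triangular}.

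\emph{Inclusion.} Take a basis element $x^iy^j\,dx$ of $\cU_h^{(r)}$ with $i=v+p^mb+\sum_\ell p^{m+\ell}t_\ell$ and $j=s$. Write $i=pL+c$; since $m\ge1$, $c$ is the last base-$p$ digit of $v$. Consider a surviving term of \cref{prop-expansion} given by $\tau$ and $\bn$. By \eqref{eq-u} its exponent is $u=L+e-1+\sum_{\nu\ge1}n_\nu p^{\nu-1}$, where $L=\lfloor v/p\rfloor+p^{m-1}b+\sum_\ell p^{m-1+\ell}t_\ell$. We assign new parameters as follows:
\[
v'=\lfloor v/p\rfloor+e-1+\sum_{\nu=1}^{m-1}n_\nu p^{\nu-1}, \quad b'=b+n_m, \quad t'_{\ell+1}=t_\ell+n_{m+\ell}\ (1\le\ell\le r), \quad t'_1=\ldots,\quad s'=\tau.
\]
The subtle point is the index bookkeeping. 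The new value of $m$ is $m-1$, and the weights shift down by one power of $p$. The term $n_m p^{m-1}$ therefore becomes the new $b'$. The old $b$ sits at weight $p^{m-1}$, the same as $n_m$, so they combine; the old $t_\ell$ move to weight $p^{m-1+\ell}$, joining $n_{m+\ell}$. A cleaner reading is: new $b'=b+n_m$, new $t'_\ell=t_\ell+n_{m+\ell}$ for $\ell\le r$, and new $t'_{r+1}=n_h$.

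The sum constraint then follows from $b'+\sum t'+s'\le b+\sum t+\sum_{\nu\ge m} n_\nu+\tau\le s-\ldots$, using $n=j-\tau$ and $n_0$ absorbing slack. The main check is $v'<p^{m-1}$. Here we use $\lfloor v/p\rfloor\le p^{m-1}-1$, the bound $\sum_{\nu<m}n_\nu p^{\nu-1}\le p^{m-2}\sum_{1\le\nu<m}n_\nu$, and the trick from \cref{lem-image}: if $e=2$ then $n_0\ge2$. Any overflow of $v'$ past $p^{m-1}$ is not fatal. It is absorbed by carrying into $b'$, paid for by the units of $n_0,\dots,n_{m-1}$ that are missing from the sum constraint. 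I would handle this by comparing integers directly, i.e.\ showing $u\le p^{m-1}-1+p^{m-1}(\text{stuff})$ and then re-expanding $u$ in the form of \eqref{eq-Uhr}. I expect this carry estimate to be the main obstacle, as it is the analogue of the $e=2$ argument but with several lower coefficients interacting.

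\emph{Minor.} For the lower bound we pick, for each target tuple $(v',b',t'_1,\dots,t'_{r+1},s')$, a source in $\cU_h^{(r)}$ defined by the pure $n_h$ transition of \cref{rem-monomial}. We choose $v=pv'+(p-1-t'_{r+1})$ (allowed since $m\ge1$), keep $b=b'$ and $t_\ell=t'_\ell$, and set $s=s'+t'_{r+1}$. This is legitimate because the monomial map sends $x^iy^s$ to $x^{L+n_cp^{h-1}}y^{s-n_c}$ with $n_c=t'_{r+1}$. We then order rows and columns by $J=\sum_\ell t'_\ell+b'+s'+\ldots$; more precisely we grade by the total $b+\sum t+s$ of the source. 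Exactly as in \cref{lem-triangular}, other terms either lose total weight, because $n_0,\dots,n_{m-1}$ are dropped from the count, or land elsewhere. Equality of totals forces $\bn=(0,\dots,0,n_h)$ and $\tau'=\tau$ modulo $p$, so the diagonal blocks are diagonal with entries $\pm\binom{s}{t'_{r+1}}\alpha_h^{t'_{r+1}}\ne0$. Applying \cref{lem-blocktri}(iii) gives $\rank\ge\dim\cU_h^{(r+1)}$. Because $\Cart$ is semilinear, this minor argument is valid on the subspace $\cU_h^{(r)}$ with its monomial basis.
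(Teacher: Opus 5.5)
\textbf{Overall structure.} Your plan is the paper's: induction via $\Img(\Cart^{r+1})=\Cart(\cU_h^{(r)})$, a containment, and a square minor built on exactly the paper's sources $x^{pv'+(p-1-t_{r+1})+\cdots}y^{\sigma+t_{r+1}}\,dx$.

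\textbf{Containment.} Your idea is right, but you leave the carry estimate unfinished. It closes quickly. Put $\Delta=e-1+\sum_{\nu=1}^{m-1}n_\nu p^{\nu-1}$. Then $\lfloor v/p\rfloor+\Delta<2p^{m-1}$, so the carry $\kappa$ into $b'$ is $0$ or $1$. Moreover $\kappa\le n_0+\sum_{\nu=1}^{m-1}n_\nu$: if $e=1$ and these $n_\nu$ all vanish, then $\Delta=0$ and there is no carry; if $e=2$, then $n_0\ge2$. The output total is therefore $b+\sum_\ell t_\ell+s-n_0-\sum_{\nu=1}^{m-1}n_\nu+\kappa\le p-2$. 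Hence no further carries occur, and the top coefficient of the output is exactly $t'_{r+1}=n_h$. You need this last fact for the minor.

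\textbf{The gap: the grading of the minor.} Grading by the total $b+\sum t+s$ does not make the diagonal blocks diagonal. The quantities $n_m,\dots,n_{h-1}$ are not dropped; they are added into $b'$ and $t'_1,\dots,t'_r$, so they preserve the total. For $m\ge2$, a carry $\kappa=1$ can also restore the unit lost from a single $n_\nu=1$ with $\nu<m$.

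Here is a concrete failure. Take $p=3$, $h=3$, $r=1$ (so $m=1$). The source $xy\,dx\in\cU_3^{(1)}$ attached to the target $x^9\,dx$ satisfies $\Cart(xy\,dx)=-(\alpha_1x+\alpha_2x^3+\alpha_3x^9)\,dx$. All three outputs $x\,dx$, $x^3\,dx$, $x^9\,dx$ are rows of your minor with total $1$, which equals the total of the source. So your claim that ``equality of totals forces $\bn=(0,\dots,0,n_h)$'' is false. The total-$1$ diagonal block has nonzero off-diagonal entries whenever $a_1a_2\neq0$, and you give no other argument for its invertibility.

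\textbf{The fix.} Use the grading that \cref{lem-triangular} actually uses. Put a column in group $J=s$ (its $y$-exponent) and a row in group $J'=\sigma+t_{r+1}$. By the top-digit property, a surviving term with output $y$-exponent $\sigma'$ lands in $J'=\sigma'+n_h\le\sigma'+n=J$. Equality holds if and only if $\bn=(0,\dots,0,n)$. In that case the survival condition reads $p-1+\sigma-\sigma'\in\{p-1,2p-1\}$, which forces $\sigma'=\sigma$, $n_h=t_{r+1}$, and the associated target. With this grading the diagonal blocks really are diagonal, with entries $\pm\binom{s}{t_{r+1}}\alpha_h^{t_{r+1}}\neq0$, and your minor is invertible by \cref{lem-blocktri}(iii).
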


This is \cref{thm-A}; the formula for $r=1$ and the $a$-number are
\cref{thm-hge2}.

\begin{proof}
For $r=1$ the assertion is \cref{thm-hge2}.
Assume $1\le r\le h-2$ and $\Img(\Cart^r)=\cU_h^{(r)}$, and set $m=h-r-1\ge1$.
By \cref{rem-semilinear}, $\Img(\Cart^{r+1})=\Cart(\cU_h^{(r)})$, so it suffices to
show that $\Cart(\cU_h^{(r)})=\cU_h^{(r+1)}$.

\emph{Containment.} Take a basis differential
\begin{equation}\label{eq-omega-r}
  \omega=x^{\,v+p^mb+\sum_{\ell=1}^{r}p^{m+\ell}t_\ell}\,y^s\,dx
\end{equation}
of $\cU_h^{(r)}$ and write $v=pv'+c$ with $0\le c\le p-1$ and $0\le v'<p^{m-1}$.
Choose a term of the Cartier expansion of $\omega$, given by $0\le\sigma\le s$,
$n=s-\sigma$ and $|\bn|=n$. Since $m\ge1$, the residue modulo $p$ of the exponent of
$x$ is $c$, and as in \cref{sec-expansion} a surviving term satisfies
\begin{equation}\label{eq-survival-r}
  c+n+n_0=ep-1,\qquad e\in\{1,2\}.
\end{equation}
After applying $\Cart$, the exponent of $x$ is
\begin{equation}\label{eq-exp-r}
  v'+p^{m-1}b+\sum_{\ell=1}^{r}p^{m+\ell-1}t_\ell+(e-1)+\sum_{\nu=1}^{h}n_\nu p^{\nu-1}.
\end{equation}
Put $\Delta=e-1+\sum_{\nu=1}^{m-1}n_\nu p^{\nu-1}$ and write
$v'+\Delta=v''+\kappa p^{m-1}$ with $0\le v''<p^{m-1}$. We claim that
$\kappa\in\{0,1\}$. For $m=1$ this is immediate, because $v'=0$ and $\Delta=e-1$.
For $m\ge2$ we have $\Delta\le1+(p-2)p^{m-2}<p^{m-1}$, and the claim follows from
$v'<p^{m-1}$. Using $h=m+r+1$, \eqref{eq-exp-r} becomes
\[
  v''+p^{m-1}(b+n_m+\kappa)+\sum_{\ell=1}^{r}p^{m-1+\ell}(t_\ell+n_{m+\ell})
  +p^{m+r}n_h ,
\]
which has the shape of \eqref{eq-Uhr} for $\cU_h^{(r+1)}$, with output $y$-exponent
$\sigma$. It remains to check the defining inequality. Since
$\sum_{\nu=1}^hn_\nu=n-n_0$ and $\sigma=s-n$,
\[
  (b+n_m+\kappa)+\sum_{\ell=1}^{r}(t_\ell+n_{m+\ell})+n_h+\sigma
  =b+\sum_{\ell=1}^{r}t_\ell+s-n_0-\sum_{\nu=1}^{m-1}n_\nu+\kappa .
\]
Here $b+\sum_\ell t_\ell+s\le p-2$, and
$\kappa\le n_0+\sum_{\nu=1}^{m-1}n_\nu$: this is clear if $\kappa=0$; if $\kappa=1$
and $e=1$, then some $n_\nu$ with $1\le\nu\le m-1$ is nonzero, since otherwise
$\Delta=0$ and there is no carry; if $e=2$, then \eqref{eq-survival-r} and
$c+n\le 2p-3$ give $n_0\ge2$. Thus $\Cart(\cU_h^{(r)})\subseteq\cU_h^{(r+1)}$.
In the displayed exponent the coefficients $b+n_m+\kappa$ and $t_\ell+n_{m+\ell}$ are
at most $p-2$ by the inequality just proved, so no further carries occur and the top
coefficient of the output is exactly $n_h$. We record this:
\begin{equation}\label{eq-top-digit}
  \text{the output lies in the row of }\cU_h^{(r+1)}\text{ with }t_{r+1}=n_h
  \text{ and }y\text{-exponent }\sigma.
\end{equation}

\emph{Lower bound.} Take a target basis element
\[
  x^{\,v'+p^{m-1}b+\sum_{\ell=1}^{r+1}p^{m-1+\ell}t_\ell}\,y^\sigma\,dx\in\cU_h^{(r+1)}
\]
and associate with it the source differential
\begin{equation}\label{eq-source}
  x^{\,pv'+(p-1-t_{r+1})+p^mb+\sum_{\ell=1}^{r}p^{m+\ell}t_\ell}\,
  y^{\sigma+t_{r+1}}\,dx .
\end{equation}
It is a basis element of $\cU_h^{(r)}$: its parameters are
$v=pv'+(p-1-t_{r+1})<p^m$, $b$, $t_1,\dots,t_r$ and $s=\sigma+t_{r+1}$, and
$b+t_1+\dots+t_{r+1}+\sigma\le p-2$. Different targets give different sources,
because $v$ determines $t_{r+1}$ (through its residue $p-1-t_{r+1}$ modulo $p$) and
$v'$, and then $s$ determines $\sigma$; by \cref{lem-Uhr-basis}(ii) the sources are
distinct basis elements. Consider the square submatrix of $\Cart|_{\cU_h^{(r)}}$ with
rows indexed by the targets and columns by the associated sources, and group both by
$J=\sigma+t_{r+1}$ (for a column, $J$ is its $y$-exponent $s$).

By the containment, every surviving term of a selected source lands in one of the
rows. For a surviving term of a source with $y$-exponent $J$ and output $y$-exponent
$\sigma'$ we have $n_h\le n=J-\sigma'$, so by \eqref{eq-top-digit} it lands in a row
with $J'=\sigma'+n_h\le J$. If $J'=J$, then $n_h=n$ and $n_0=\dots=n_{h-1}=0$, and
the survival condition becomes
$(p-1-t_{r+1})+(J-\sigma')=p-1+\sigma-\sigma'$, which can equal $p-1$ but not
$2p-1$. Hence $\sigma'=\sigma$, $n_h=t_{r+1}$, $e=1$, $\Delta=0$ and $\kappa=0$, so
the output is exactly the associated target, with coefficient
$(-1)^{t_{r+1}}\binom{\sigma+t_{r+1}}{\sigma}\alpha_h^{\,t_{r+1}}\neq0$.
Thus the block with row group $J'$ and column group $J$ vanishes when $J'>J$, and
each diagonal block is diagonal with nonzero diagonal entries. By
\cref{lem-blocktri}(iii) the submatrix is invertible, so
$\rank\big(\Cart|_{\cU_h^{(r)}}\big)\ge\dim\cU_h^{(r+1)}$. Together with the
containment, $\Cart(\cU_h^{(r)})=\cU_h^{(r+1)}$, and induction completes the proof.
\end{proof}

\begin{remark}[An alternative route to the lower bound]\label{rem-groen}
For a polynomial $f\in k[x]$ of degree $d=p^h+1$ write $\Cart_f$ for the Cartier
operator of $y^p-y=f(x)$. Groen's upper bound for $\dim_k\ker\Cart_f^r$
\cite[Theorem~6.16]{Groen2024} depends only on the ramification data. For a one-point
cover of $\Pone$ with break $d$, the proof of that bound is sharp for the monomial
member $y^p-y=x^d$. Hence
$\rank(\Cart_f^r)\ge\rank(\Cart_{x^d}^r)$ for all $r\ge1$. At the monomial point the
ranks can be read off from \cref{rem-monomial}, and this gives the inequality
$\rank(\Cart^r)\ge p^{h-r-1}\binom{p+r}{r+2}$ for every member $C_R$. The containment
$\Img(\Cart^r)\subseteq\cU_h^{(r)}$, which gives the reverse inequality and the
independence of the image from $R$, does not follow from \cite{Groen2024}.
\end{remark}

\section{The last iterate: proof of \texorpdfstring{\cref{thm-B}}{Theorem B}}\label{sec-last}

In this section $h\ge1$. For $h=1$ put $\cU_1^{(0)}=H^0(C_R,\Omg)$. With this
convention, \cref{thm-higher} gives $\Img(\Cart^{h-1})=\cU_h^{(h-1)}$ for every
$h\ge1$. At $r=h-1$ (so $m=0$ and $v=0$) this space has the basis
\begin{equation}\label{eq-last-basis}
  x^{\,b+pt_1+p^2t_2+\dots+p^{h-1}t_{h-1}}\,y^s\,dx,\qquad
  b+t_1+\dots+t_{h-1}+s\le p-2 ;
\end{equation}
for $h=1$ this is the basis $\cB_1$. For a differential $\omega$ in
\eqref{eq-last-basis} we write $\bt=(t_1,\dots,t_{h-1})$,
$T=|\bt|=t_1+\dots+t_{h-1}$ (so $T=0$ if $h=1$) and
\[
  \rho(\omega)=b+T+2s-(p-1).
\]

\begin{lemma}\label{lem-last-setup}
Let $\omega$ be as in \eqref{eq-last-basis}, and consider a term of its Cartier
expansion given by $0\le\sigma\le s$, $n=s-\sigma$ and $|\bn|=n$.
\begin{enumerate}[label=\textup{(\roman*)}, leftmargin=2em]
  \item The term survives only if $b+n+n_0=p-1$; in that case $n_0\ge1$.
  \item A surviving term equals
  \[
    (-1)^{n}\binom{s}{\sigma}\binom{n}{n_0,\dots,n_h}\prod_{\nu=0}^{h}\alpha_\nu^{n_\nu}\;
    x^{u_1+pu_2+\dots+p^{h-1}u_h}\,y^\sigma\,dx,
  \]
  where $u_\nu=t_\nu+n_\nu$ for $1\le\nu\le h-1$ and $u_h=n_h$; moreover
  $u_1+\dots+u_h+2\sigma=\rho(\omega)$.
  \item In particular $\Cart(\omega)=0$ if $\rho(\omega)<0$, and always
  $\rho(\omega)\le p-3$. The integers $u_1,\dots,u_h$ are at most $p-3$, so the
  output monomials are pairwise distinct and determine $(u_1,\dots,u_h,\sigma)$.
  \item Given $\omega$ and a vector $(u_1,\dots,u_h,\sigma)$, at most one term of the
  expansion of $\Cart(\omega)$ lands on that vector.
\end{enumerate}
\end{lemma}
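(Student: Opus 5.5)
The plan is to specialize \cref{prop-expansion} to a differential $\omega$ in \eqref{eq-last-basis}, whose $x$-exponent is $i=b+pt_1+\dots+p^{h-1}t_{h-1}$. For $h\ge2$ we have $i=pL+c$ with $c=b$ and $L=t_1+pt_2+\dots+p^{h-2}t_{h-1}$. For $h=1$ we have $i=b\le p-2$, so again $c=b$ and $L=0$.

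For (i), by \eqref{eq-survival} a term survives only if $b+n+n_0=ep-1$ with $e\in\{1,2\}$. Since $b+n\le b+s\le p-2$ and $n_0\le n$, we get $b+n+n_0\le 2(p-2)<2p-1$, so $e=1$. Moreover $n_0=p-1-b-n\ge1$ because $b+n\le p-2$.

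For (ii), substitute $e=1$ into \eqref{eq-u}. The exponent is
\[
u=L+\sum_{\nu=1}^{h}n_\nu p^{\nu-1}=\sum_{\nu=1}^{h-1}p^{\nu-1}(t_\nu+n_\nu)+p^{h-1}n_h,
\]
which is $u_1+pu_2+\dots+p^{h-1}u_h$. The sign is $(-1)^{s-\sigma}=(-1)^n$, and the binomial is $\binom{s}{\sigma}$. For the identity we compute
\[
\sum_\nu u_\nu+2\sigma=T+(n-n_0)+2\sigma=T+2s-n-n_0 .
\]
Using $n+n_0=p-1-b$ this equals $b+T+2s-(p-1)=\rho(\omega)$.

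For (iii), if $\rho(\omega)<0$ the identity in (ii) cannot hold with nonnegative $u_\nu,\sigma$. Hence no term survives and $\Cart(\omega)=0$. For the upper bound, $\rho=(b+T+s)+s-(p-1)\le(p-2)+(p-2)-(p-1)=p-3$. Then each $u_\nu\le\rho\le p-3<p$, so $u_1+pu_2+\dots+p^{h-1}u_h$ is a base-$p$ expansion. Thus the exponent determines $(u_1,\dots,u_h)$, and the $y$-exponent gives $\sigma$.

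For (iv), fix $\omega$ and a target $(u_1,\dots,u_h,\sigma)$. Then $n=s-\sigma$ is fixed, $n_\nu=u_\nu-t_\nu$ for $1\le\nu\le h-1$, and $n_h=u_h$. Finally $n_0=n-\sum_{\nu\ge1}n_\nu$, so the pair $(\sigma,\bn)$, and hence the term, is unique.

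The only delicate point is ruling out $e=2$ and verifying there are no carries. The ambiguity of the $x$-exponent's residue for $h=1$ is handled by treating that case separately as above. Otherwise everything is bookkeeping from \cref{prop-expansion}.
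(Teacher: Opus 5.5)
Your proof is correct and follows the paper's argument essentially step for step. The paper also rules out $e=2$ via $b+n+n_0\le 2p-4$, reads off the exponent with $e=1$, derives $\sum u_\nu+2\sigma=\rho(\omega)$ from $n+n_0=p-1-b$, and finishes with the same base-$p$ and uniqueness bookkeeping. The differences are cosmetic: you substitute $c=b$ and $L=t_1+pt_2+\dots+p^{h-2}t_{h-1}$ into \eqref{eq-survival} and \eqref{eq-u}, whereas the paper recomputes the total exponent $\mu$ directly, and the paper also records the slightly sharper bound $\rho(\omega)\le s-1$.
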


\begin{proof}
Write $\omega=x^iy^s\,dx$ with $i=b+pt_1+\dots+p^{h-1}t_{h-1}$. By \eqref{eq-m}, the
term indexed by $\sigma$ and $\bn$ is a constant multiple of
$(y^\sigma)^p\,x^\mu\,dx$ with
\[
  \mu=b+pt_1+\dots+p^{h-1}t_{h-1}+n+\sum_{\nu=0}^{h}n_\nu p^\nu .
\]

(i) Modulo $p$ only $b$, $n$ and $n_0$ survive, so $\mu\equiv b+n+n_0\pmod p$. Since
$n_0\le n\le s$, $b+s\le b+T+s\le p-2$ and $s\le p-2$,
\[
  0\le b+n+n_0\le b+2s\le 2p-4 .
\]
The only integer in $[0,2p-4]$ congruent to $p-1$ modulo $p$ is $p-1$, so survival
forces $b+n+n_0=p-1$. Since $b+n\le b+s\le p-2$, we get $n_0\ge1$.

(ii) The coefficient is that of \cref{prop-expansion}. By (i),
$\mu=p\big(\sum_{\nu=1}^{h-1}p^{\nu-1}(t_\nu+n_\nu)+p^{h-1}n_h\big)+p-1$, which gives
the output exponent. Moreover $n_1+\dots+n_h=n-n_0$ and, by (i), $n_0=p-1-b-n$, so
\[
  \sum_{\nu=1}^{h-1}(t_\nu+n_\nu)+n_h=T+(n-n_0)=T+2n+b-(p-1).
\]
Since $2n=2s-2\sigma$, this gives $u_1+\dots+u_h+2\sigma=\rho(\omega)$.

(iii) The left-hand side of the last identity is nonnegative, which gives the first
claim. Also $\rho(\omega)\le(p-2-s)+2s-(p-1)=s-1\le p-3$. Hence each $u_\nu\le p-3$,
and $u_1+pu_2+\dots+p^{h-1}u_h$ is a base-$p$ expansion.

(iv) The vector fixes $\sigma$, hence $n=s-\sigma$, and then $n_\nu=u_\nu-t_\nu$ for
$1\le\nu\le h-1$, $n_h=u_h$ and $n_0=n-\sum_{\nu\ge1}n_\nu$.
\end{proof}

Fix $0\le\rho\le p-3$. Let $V_\rho$ be the span of the basis elements
\eqref{eq-last-basis} with $\rho(\omega)=\rho$, and $W_\rho$ the span of the
differentials $x^{u_1+pu_2+\dots+p^{h-1}u_h}y^\sigma\,dx$ with
$u_1+\dots+u_h+2\sigma=\rho$. By \cref{lem-last-setup},
$\Cart(V_\rho)\subseteq W_\rho$. Put
\begin{equation}\label{eq-AR}
  N_\rho=\#\{(u_1,\dots,u_h,\sigma)\in\Z_{\ge0}^{h+1}:\ u_1+\dots+u_h+2\sigma=\rho\}
  =\sum_{\sigma=0}^{\lfloor \rho/2\rfloor}\binom{\rho-2\sigma+h-1}{h-1},
\end{equation}
so that $\dim W_\rho=N_\rho$. These monomials are regular: their $x$-exponent is at
most $p^{h-1}(\rho-2\sigma)\le p^{h-1}(p-1-\sigma)-1$ and $\sigma\le p-2$, so they
satisfy the bound of \cref{lem-basis}. The sequence $N_\rho$ is nondecreasing in
$\rho$, because $(u_1,\dots,u_h,\sigma)\mapsto(u_1+1,u_2,\dots,u_h,\sigma)$ is
injective.

\begin{lemma}\label{lem-VR}
A basis element \eqref{eq-last-basis} lies in $V_\rho$ if and only if $s\ge\rho+1$
and $b=p-1+\rho-T-2s\ge0$. Consequently $\dim V_\rho=N_{p-3-\rho}$.
\end{lemma}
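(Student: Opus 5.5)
The first claim is a direct unwinding of the definitions. By definition, a basis element \eqref{eq-last-basis} lies in $V_\rho$ exactly when $\rho(\omega)=b+T+2s-(p-1)=\rho$, that is, when $b=p-1+\rho-T-2s$. So the elements of $V_\rho$ correspond to tuples $(\bt,s)$ for which this value of $b$ satisfies two conditions: $b\ge0$, and the defining constraint $b+T+s\le p-2$. Substituting the formula for $b$, the constraint reads $p-1+\rho-s\le p-2$, which is equivalent to $s\ge\rho+1$. Conversely, if $s\ge\rho+1$ and $b\ge0$, then all parameters are nonnegative and the constraint holds, so we get a basis element of \eqref{eq-last-basis} with $\rho(\omega)=\rho$. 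This proves the characterization, and it identifies $V_\rho$ with the set of pairs $(\bt,s)\in\Z_{\ge0}^{h-1}\times\Z_{\ge0}$ such that $s\ge\rho+1$ and $T+2s\le p-1+\rho$.

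For the dimension count, I will construct an explicit bijection with the index set of $N_{p-3-\rho}$ in \eqref{eq-AR}. That set consists of the tuples $(u_1,\dots,u_h,\sigma)$ with $u_1+\dots+u_h+2\sigma=p-3-\rho$. Put $s'=s-(\rho+1)\ge0$. The inequality $T+2s\le p-1+\rho$ becomes
\[
T+2s'\le p-1+\rho-2\rho-2=p-3-\rho .
\]
Now introduce a slack variable $w=p-3-\rho-T-2s'\ge0$. Then the tuple $(t_1,\dots,t_{h-1},w,s')$ satisfies $t_1+\dots+t_{h-1}+w+2s'=p-3-\rho$. This is exactly the equation defining $N_{p-3-\rho}$, with $h$ linear variables $(t_1,\dots,t_{h-1},w)$ and $\sigma=s'$. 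The map is clearly invertible: from $(u,\sigma)$ recover $t_\nu=u_\nu$ for $\nu\le h-1$, then $s=\sigma+\rho+1$, and finally $b$ from the formula. Since $p-3-\rho\ge0$, the bijection gives $\dim V_\rho=N_{p-3-\rho}$.

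One point needs care: the basis elements \eqref{eq-last-basis} must be distinct for distinct parameters. This follows from \cref{lem-Uhr-basis}(ii) in the case $m=0$; for $h=1$ it is immediate, since the differential is just $x^by^s\,dx$. The case $h=1$ also needs a separate check of the slack step: there $\bt$ is empty, $T=0$, and $w$ plays the role of $u_1$, so the bijection still works. I do not expect any real obstacle here. The only step to watch is getting the translation $s\mapsto s-\rho-1$ right, so that the bound comes out to exactly $p-3-\rho$.
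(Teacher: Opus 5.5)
Your proposal is correct and follows the paper's argument: the paper likewise writes $s=\rho+1+\sigma'$ and counts the solutions of $b+T+2\sigma'=p-3-\rho$ in $h$ variables of weight $1$ and one of weight $2$. Your slack variable $w$ is exactly $b$, since $b=p-3-\rho-T-2s'$, so the two bijections coincide.
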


\begin{proof}
If $b+T+2s=p-1+\rho$, then $b+T+s\le p-2$ is equivalent to $s\ge\rho+1$. Writing
$s=\rho+1+\sigma'$, the remaining condition becomes $b+T+2\sigma'=p-3-\rho$ with
$b,t_1,\dots,t_{h-1},\sigma'\ge0$: these are $h$ variables of weight $1$ and one of
weight $2$, so their number is $N_{p-3-\rho}$.
\end{proof}

\begin{figure}[ht]
\centering
\begin{tikzpicture}[scale=0.82]
  \draw[->,gray!70] (-0.4,0) -- (6.8,0) node[right,black] {$b=i$};
  \draw[->,gray!70] (0,-0.4) -- (0,6.6) node[above,black] {$s=j$};
  \foreach \x in {0,...,5} \node[below,font=\scriptsize] at (\x,-0.12) {$\x$};
  \foreach \y in {1,...,5} \node[left,font=\scriptsize] at (-0.12,\y) {$\y$};
  \draw[dashed,gray] (0,5.35) -- (5.35,0);
  \node[font=\scriptsize,gray,rotate=-45,anchor=south] at (2.85,2.85) {$i+j=p-2$};
  \draw[rc0,thick] (4,1) -- (0,3);
  \draw[rc1,thick] (3,2) -- (1,3);
  \draw[rc2,thick] (2,3) -- (0,4);
  \foreach \i in {0,...,5}{
    \foreach \j in {0,...,5}{
      \pgfmathtruncatemacro{\s}{\i+\j}
      \ifnum\s<6
        \pgfmathtruncatemacro{\r}{\i+2*\j-6}
        \ifnum\r<0
          \fill[gray!40] (\i,\j) circle (2.3pt);
        \else
          \fill[rc\r] (\i,\j) circle (3.4pt);
        \fi
      \fi
    }}
  \begin{scope}[shift={(7.6,5.4)},font=\small]
    \foreach \r/\d/\w [count=\n from 0] in {0/3/1,1/2/1,2/2/2,3/1/2,4/1/3}{
      \fill[rc\r] (0,-0.62*\n) circle (3.4pt);
      \node[right] at (0.2,-0.62*\n) {$\rho=\r$:\ $\dim V_{\rho}=\d,\ \dim W_{\rho}=\w$};
    }
    \fill[gray!40] (0,-3.3) circle (2.3pt);
    \node[right,align=left] at (0.2,-3.3) {$\rho<0$: killed by $\Cart$};
  \end{scope}
\end{tikzpicture}
\caption{The grading of \cref{lem-last-setup} for $h=1$ and $p=7$. The basis
element $x^iy^j\,dx$ of $\cB_1$ has $\rho=i+2j-(p-1)$; the points on the line
$i+2j=p-1+\rho$ span $V_\rho$, which $\Cart$ maps into $W_\rho$. Summing
$\min\{\dim V_\rho,\dim W_\rho\}=1+1+2+1+1=6=\tfrac{p^2-1}{8}$ gives $\rank(\Cart)$
when $a_0\ne0$.}
\label{fig-h1}
\end{figure}

\begin{proposition}\label{prop-block-rank}
Assume $a_0\neq0$ and $0\le\rho\le p-3$. Then
\[
  \rank\big(\Cart|_{V_\rho}\colon V_\rho\to W_\rho\big)=\min\{N_\rho,\,N_{p-3-\rho}\}.
\]
\end{proposition}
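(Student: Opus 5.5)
The inequality $\le$ is immediate, since $\Cart(V_\rho)\subseteq W_\rho$ with $\dim V_\rho=N_{p-3-\rho}$ (\cref{lem-VR}) and $\dim W_\rho=N_\rho$. So the plan is to produce a minor of size $\min\{N_\rho,N_{p-3-\rho}\}$ with nonzero determinant, using only $a_0\ne0$ and $a_h\ne0$. By \cref{lem-last-setup}(iv), the entry with source $\omega=(b,\bt,s)\in V_\rho$ and target $(u_1,\dots,u_h,\sigma)\in W_\rho$ is nonzero only if $n_\nu=u_\nu-t_\nu\ge0$ for $1\le\nu\le h-1$. It is then the single coefficient of \cref{lem-last-setup}(ii), with $n_h=u_h$, $n=s-\sigma$ and $n_0=n-\sum_{\nu\ge1}n_\nu$; survival is automatic from the grading.

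\emph{Block triangular structure.} I would group sources by $\bt$ and targets by $\bu'=(u_1,\dots,u_{h-1})$, both ranging over vectors with sum $\le\rho$. Since $\bu'\ge\bt$ componentwise whenever the entry is nonzero, ordering these vectors by a linear extension of the componentwise order (for instance by $|\cdot|$, then lexicographically) makes the matrix block triangular in the sense of \cref{lem-blocktri}.

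\emph{Diagonal blocks.} On a diagonal block $\bu'=\bt$, all middle $n_\nu$ vanish, and only $\alpha_0$ and $\alpha_h$ occur. Put $\rho'=\rho-T$. The sources are the pairs $(b,s)$ with $b+2s=p-1+\rho'$ and $s\ge\rho+1$. The targets are the pairs $(u_h,\sigma)$ with $u_h+2\sigma=\rho'$. The entry is
\[
  (-1)^{s-\sigma}\binom{s}{\sigma}\binom{s-\sigma}{u_h}\frac{(s-\sigma-u_h)!\,u_h!}{(s-\sigma-u_h)!\,u_h!}\cdots
  =\pm\binom{s}{q}\binom{q}{\sigma}\,\alpha_0^{\,s-q}\alpha_h^{\,u_h},\qquad q=\sigma+u_h,
\]
where the simplification is \cref{lem-trinomial}. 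Since $q=\rho'-\sigma$ determines $\sigma$ for fixed $\rho'$, the binomial $\binom{q}{\sigma}$ is a nonzero scalar attached to each row (\cref{lem-binom-nonzero}). The powers of $\alpha_0,\alpha_h$ factor as row and column scalings, because $s-q$ splits as $s$ minus a row quantity. After these scalings the block becomes the binomial Vandermonde matrix $\big(\binom{s}{q}\big)$. The distinct values $s$ index the columns and the distinct values $q\le\rho'\le p-3$ index the rows. By \cref{lem-vandermonde} it has rank equal to the minimum of its two sizes.

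\emph{Main obstacle.} \cref{lem-blocktri} needs \emph{all} diagonal blocks to have full row rank, or all to have full column rank. The sizes of the blocks, $\#\{\sigma\le\rho'/2\}$ against $\#\{s\}$, depend on $T$, and need not compare uniformly with the global comparison of $N_\rho$ and $N_{p-3-\rho}$. To handle this I would first try regrouping differently: drop the distinguished role of $u_h$, group targets by $(u_2,\dots,u_h)$ or by a suitable weight, and match sources to targets injectively along a reversed order. The idea is to run the triangularity argument on a chosen square subset, picking in each diagonal block exactly the rows or columns given by a combinatorial injection $W_\rho\hookrightarrow V_\rho$ (or the reverse), built from the nondecreasing property of $N_\rho$ (the map $u_1\mapsto u_1+1$). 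If that fails, I would use the symmetry $\rho\leftrightarrow p-3-\rho$ to reduce to $N_\rho\le N_{p-3-\rho}$, and then prove full row rank directly. For that I would enlarge each target block using the off-diagonal entries where $\bu'>\bt$ and some $n_\nu\ne0$ in the middle. This is the step I expect to require the real work.
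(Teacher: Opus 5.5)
\textbf{There is a genuine gap.} It sits exactly at the step you call the main obstacle. That obstacle is not real, so the workarounds you sketch (regrouping, a combinatorial injection, a symmetry $\rho\leftrightarrow p-3-\rho$, enlarging blocks with off-diagonal entries) are unjustified and also unnecessary.

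What is missing is the explicit count of the diagonal block sizes. For fixed $\bt$ with $T=|\bt|$:
\begin{itemize}
\item the rows are the pairs $(u_h,\sigma)$ with $u_h+2\sigma=\rho-T$;
\item the columns are the $s$ with $\rho+1\le s\le\lfloor(p-1+\rho-T)/2\rfloor=M+\lfloor(\rho-T)/2\rfloor$.
\end{itemize}
Hence
\[
 \beta_T=\Big\lfloor\frac{\rho-T}{2}\Big\rfloor+1\ \ (T\le\rho),\qquad
 \gamma_T=\max\Big\{0,\ M-\rho+\Big\lfloor\frac{\rho-T}{2}\Big\rfloor\Big\},
\]
and whenever $T\le\rho$ and $\gamma_T>0$ one has $\gamma_T-\beta_T=M-\rho-1$, \emph{independently of $T$}. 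The comparison is therefore uniform:
\begin{itemize}
\item If $\rho\le M-1$, every block with $T\le\rho$ has $\gamma_T\ge\beta_T$ (and $\gamma_T>0$ automatically). Blocks with $T>\rho$ have no rows. So every diagonal block has full row rank, and \cref{lem-blocktri}(i) gives rank $N_\rho$.
\item If $\rho\ge M$, every source satisfies $T\le p-3-\rho\le M-2<\rho$. So every block that has columns has $\gamma_T=\beta_T+(M-\rho-1)<\beta_T$. Every diagonal block has full column rank, and \cref{lem-blocktri}(ii) gives rank $\dim V_\rho=N_{p-3-\rho}$.
\end{itemize}
Since $\rho\le M-1$ if and only if $\rho\le p-3-\rho$, and $N$ is nondecreasing, both cases give $\min\{N_\rho,N_{p-3-\rho}\}$. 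This is how the paper concludes. With this computation, your block-triangular setup finishes the proof with no further work.

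Two smaller points need fixing.
\begin{itemize}
\item Your source groups should not be restricted to $|\bt|\le\rho$. For $h\ge2$, sources can have $T$ as large as $p-3-\rho$, which exceeds $\rho$ when $\rho<M-1$. These sources must be included; their diagonal blocks have no rows, which is harmless.
\item \cref{lem-vandermonde} does not apply verbatim to $\big(\binom{s}{q}\big)$. Your row index $q$ runs over $\theta_0,\dots,\rho'$ with $\theta_0=\lceil\rho'/2\rceil$, not from $0$. You need the reduction $\binom{s}{\theta_0+t'}\binom{\theta_0+t'}{\theta_0}=\binom{s}{\theta_0}\binom{s-\theta_0}{t'}$ from \cref{lem-trinomial}.
  \begin{itemize}
  \item $\binom{s}{\theta_0}\ne0$ because $\theta_0<\rho+1\le s\le p-2$.
  \item $\binom{\theta_0+t'}{\theta_0}\ne0$ because $\theta_0+t'\le p-3$.
  \end{itemize}
  After this, the lemma applies to the distinct values $s-\theta_0$.
\end{itemize}
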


\begin{proof}
Let $\Theta$ be the matrix of $\Cart|_{V_\rho}$. Its rows are indexed by vectors
$(\bu',u_h,\sigma)$ with $\bu'=(u_1,\dots,u_{h-1})$, and its columns by the sources
$(b,\bt,s)$. By \cref{lem-last-setup}(ii),(iv), the entry in row $(\bu',u_h,\sigma)$
and column $(b,\bt,s)$ is either $0$ or the single term with $n_\nu=u_\nu-t_\nu$
($1\le\nu\le h-1$). In particular it vanishes unless $t_\nu\le u_\nu$ for all
$1\le\nu\le h-1$.

Group the rows according to $\bu'$ and the columns according to $\bt$; both
groupings are indexed by the finite set $I=\{\bt\in\Z_{\ge0}^{h-1}:|\bt|\le p-3\}$,
allowing empty groups. Fix a total order $\preceq$ on $I$ refining the componentwise
partial order (for instance, order by $|\bt|$ and then lexicographically). If the
block with row group $\bu'$ and column group $\bt$ is nonzero, then $\bt\le\bu'$
componentwise, hence $\bt\preceq\bu'$. So this block vanishes whenever
$\bu'\prec\bt$, which is the hypothesis of \cref{lem-blocktri}. The diagonal block
$\Theta_{\bt\bt}$ consists of the rows with $\bu'=\bt$ and the columns with the given
$\bt$.

Fix $\bt$ and put $T=|\bt|$. The rows of $\Theta_{\bt\bt}$ are the vectors
$(\bt,u_h,\sigma)$ with $u_h+2\sigma=\rho-T$, so their number is
\[
  \beta_T=
  \begin{cases}\lfloor(\rho-T)/2\rfloor+1,& T\le\rho,\\ 0,& T>\rho.\end{cases}
\]
By \cref{lem-VR} the columns are indexed by $s$ with
$\rho+1\le s\le\lfloor(p-1+\rho-T)/2\rfloor$; since $p-1=2M$, their number is
\[
  \gamma_T=\max\Big\{0,\ M-\rho+\Big\lfloor\frac{\rho-T}{2}\Big\rfloor\Big\}.
\]
In particular, if $T\le\rho$ and $\gamma_T>0$, then $\gamma_T-\beta_T=M-\rho-1$.
On $\Theta_{\bt\bt}$ we have $n_1=\dots=n_{h-1}=0$, $n_h=\rho-T-2\sigma$ and, by
\cref{lem-last-setup}(i), $n_0=n-n_h=s+\sigma-\rho+T\ge1$. The multinomial
coefficient has only two nonzero parts, so by \cref{lem-trinomial} the
$(\sigma,s)$-entry is
\begin{align*}
  &(-1)^{s-\sigma}\binom{s}{\sigma}\binom{s-\sigma}{\rho-T-2\sigma}
  \alpha_0^{\,s+\sigma-\rho+T}\alpha_h^{\,\rho-T-2\sigma}\\
  &\qquad=\underbrace{(-1)^{\sigma}\tbinom{\rho-T-\sigma}{\sigma}
     \alpha_0^{\,\sigma}\alpha_h^{\,\rho-T-2\sigma}}_{\text{depends only on the row}}
  \cdot\binom{s}{\rho-T-\sigma}\cdot
  \underbrace{(-1)^{s}\alpha_0^{\,s-(\rho-T)}}_{\text{only on the column}}.
\end{align*}
The row and column factors are nonzero because $a_0a_h\neq0$,
$s-(\rho-T)\ge1$, and $\rho-T-\sigma\le p-3$
(\cref{lem-binom-nonzero}). Hence $\rank\Theta_{\bt\bt}$ equals
the rank of $\big(\binom{s}{\theta}\big)_{\theta,s}$, where $\theta=\rho-T-\sigma$ runs
over $\theta_0,\dots,\rho-T$ with $\theta_0=\lceil(\rho-T)/2\rceil$. Writing
$\theta=\theta_0+t'$ and using \cref{lem-trinomial} in the form
$\binom{s}{\theta_0+t'}=\binom{s}{\theta_0}\binom{s-\theta_0}{t'}\big/\binom{\theta_0+t'}{\theta_0}$,
where the removed factors are nonzero because $\theta_0<\rho+1\le s\le p-2$ and
$\theta_0+t'\le\rho\le p-3$, we are reduced to the matrix
$\big(\binom{s-\theta_0}{t'}\big)$ with $0\le t'\le\lfloor(\rho-T)/2\rfloor$. The
values $s-\theta_0$ are distinct elements of $\{0,\dots,p-1\}$, so
\cref{lem-vandermonde} gives $\rank\Theta_{\bt\bt}=\min\{\beta_T,\gamma_T\}$.

Suppose first that $\rho\le M-1$. If $T>\rho$ then $\beta_T=0$; if $T\le\rho$ then
$\gamma_T=\beta_T+(M-\rho-1)\ge\beta_T$. In both cases $\Theta_{\bt\bt}$ has full row
rank, so by \cref{lem-blocktri}(i) the matrix $\Theta$ has full row rank $N_\rho$.

Suppose now that $\rho\ge M$. Every column $(b,\bt,s)$ satisfies $s\ge\rho+1$ and
$b\ge0$, hence $T=p-1+\rho-b-2s\le p-3-\rho=2M-2-\rho\le M-2<\rho$. So every $\bt$
with $\gamma_T>0$ has $T<\rho$, and then $\gamma_T=\beta_T+(M-\rho-1)<\beta_T$; thus
$\Theta_{\bt\bt}$ has full column rank. Blocks with $\gamma_T=0$ have full column rank
trivially. By \cref{lem-blocktri}(ii) the matrix $\Theta$ has full column rank
$\dim V_\rho=N_{p-3-\rho}$.

Finally, $\rho\le M-1$ is equivalent to $\rho\le p-3-\rho$, so in both cases the rank
equals $\min\{N_\rho,N_{p-3-\rho}\}$ because $N$ is nondecreasing.
\end{proof}

\begin{theorem}\label{thm-last}
Let $h\ge1$. Then $\Cart^{h+1}=0$, and $\rank(\Cart^h)=0$ if $a_0=0$, while
$\rank(\Cart^h)=\binom{M+h}{h+1}$ if $a_0\neq0$. The nilpotency index of $\Cart$ is
exactly $h$ if $a_0=0$ and exactly $h+1$ if $a_0\neq0$.
\end{theorem}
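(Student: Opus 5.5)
We start from $\Img(\Cart^{h-1})=\cU_h^{(h-1)}$, which holds for every $h\ge1$ by \cref{thm-higher} and the convention $\cU_1^{(0)}=H^0(C_R,\Omg)$. By \cref{rem-semilinear}, $\Img(\Cart^h)=\Cart(\cU_h^{(h-1)})$ and $\Img(\Cart^{h+1})=\Cart(\Img\Cart^h)$. The basis \eqref{eq-last-basis} of $\cU_h^{(h-1)}$ splits as the union of the bases of the $V_\rho$ with $0\le\rho\le p-3$, together with the elements having $\rho(\omega)<0$. The latter are killed by $\Cart$ according to \cref{lem-last-setup}(iii). Since $\Cart(V_\rho)\subseteq W_\rho$ and the $W_\rho$ are spanned by pairwise disjoint sets of monomials (\cref{lem-last-setup}(iii)), the sum $\sum_\rho W_\rho$ is direct. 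Hence
\[
\rank(\Cart^h)=\sum_{\rho=0}^{p-3}\rank\big(\Cart|_{V_\rho}\big).
\]

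If $a_0=0$, every surviving term needs $n_0\ge1$ by \cref{lem-last-setup}(i), so its weight $\prod\alpha_\nu^{n_\nu}$ contains $\alpha_0^{n_0}=0$. Thus $\Cart^h=0$, and then also $\Cart^{h+1}=0$.

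If $a_0\ne0$, \cref{prop-block-rank} gives
\[
\rank(\Cart^h)=\sum_{\rho=0}^{p-3}\min\{N_\rho,N_{p-3-\rho}\}=2\sum_{\rho=0}^{M-2}N_\rho+\epsilon,
\]
where we use the symmetry $\rho\leftrightarrow p-3-\rho=2M-2-\rho$. The middle term $\rho=M-1$ has $2\rho=p-3$, so $\epsilon=N_{M-1}$ and the sum is $2\sum_{\rho<M-1}N_\rho+N_{M-1}$.

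To evaluate this, rewrite $N_\rho$ as a count of tuples $(u_1,\dots,u_h,\sigma)$ with $u_1+\dots+u_h+2\sigma=\rho$. Then $\sum_{\rho\le K}N_\rho$ counts tuples with weighted sum at most $K$. Reorganising by parity, $2\sum_{\rho\le M-2}N_\rho+N_{M-1}$ should equal the number of $(u,\sigma,\text{extra})$ with $u_1+\dots+u_h+\sigma+\sigma'\le M-1$ and $\sigma'$ free, giving $\binom{M-1+h+1}{h+1}=\binom{M+h}{h+1}$ by \cref{lem-stars}. A cleaner route is to verify the identity via generating functions. The series $\sum_\rho N_\rho z^\rho=(1-z)^{-h}(1-z^2)^{-1}$, so the needed identity is a coefficient comparison; alternatively, induct on $M$, checking that the increment from $M$ to $M+1$ is $N_{M-1}+N_M=\binom{M+h}{h}$. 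The latter is immediate because $N_{\rho}+N_{\rho-1}$ counts solutions of $u_1+\dots+u_h+\sigma''=\rho$ with $\sigma''$ free, which is $\binom{\rho+h}{h}$. I expect this combinatorial evaluation to be the only delicate bookkeeping step.

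For $\Cart^{h+1}=0$ when $a_0\ne0$, note that $\Img(\Cart^h)\subseteq\bigoplus_\rho W_\rho$. Apply $\Cart$ to a monomial $x^{u_1+pu_2+\dots+p^{h-1}u_h}y^\sigma dx$ in $W_\rho$, and expand as in \cref{prop-expansion} with $c=u_1$ and $n=\sigma-\sigma_1$. The survival condition is $u_1+n+n_0\equiv -1\pmod p$. However, $u_1+n+n_0\le u_1+2\sigma\le\rho\le p-3<p-1$, so nothing survives. Hence $\Cart^{h+1}=0$ in all cases.

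The nilpotency statements then follow. For $a_0=0$ we have $\Cart^h=0$, while $\Cart^{h-1}\ne0$ because its rank is positive by \cref{thm-higher}, or trivially for $h=1$ since $\Cart^0=\mathrm{id}$. For $a_0\ne0$, $\rank(\Cart^h)=\binom{M+h}{h+1}>0$ because $M\ge1$.
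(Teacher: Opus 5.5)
Your proposal is correct and follows the paper's proof essentially step for step: restrict to $\cU_h^{(h-1)}$, decompose via $\Cart(V_\rho)\subseteq W_\rho$ and apply \cref{prop-block-rank}, use $n_0\ge1$ when $a_0=0$, and use the bound $u_1+n+n_0\le\rho\le p-3$ to get $\Cart^{h+1}=0$. The only difference is the final count. The paper uses the bijection $\epsilon=2\sigma$ or $\epsilon=2\sigma+1$ onto tuples $(u_1,\dots,u_h,\epsilon)$ with sum at most $M-1$. Your first phrasing of that count, with both $\sigma$ and $\sigma'$ and hence $h+2$ variables, counts the wrong set. Your induction on $M$ via $N_\rho+N_{\rho-1}=\binom{\rho+h}{h}$, with base case $N_0=1$, is a valid replacement.
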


This is \cref{thm-B}.

\begin{proof}
By \cref{thm-higher} for $h\ge2$, and by convention for $h=1$,
$\Img(\Cart^{h-1})=\cU_h^{(h-1)}$, so $\Img(\Cart^h)=\Cart(\cU_h^{(h-1)})$.

If $a_0=0$, every surviving term contains $\alpha_0^{n_0}$ with $n_0\ge1$
(\cref{lem-last-setup}(i)), so $\Cart$ vanishes on $\cU_h^{(h-1)}$ and $\Cart^h=0$.

Assume $a_0\neq0$. By \cref{lem-last-setup}(iii), the basis elements with
$\rho(\omega)<0$ are killed, and $\cU_h^{(h-1)}$ is spanned by them together with
$V_0,\dots,V_{p-3}$. The spaces $W_\rho$ are spanned by pairwise disjoint sets of
monomials, because $\rho=u_1+\dots+u_h+2\sigma$ is determined by the output. Hence
the matrix of $\Cart|_{\cU_h^{(h-1)}}$ is block diagonal with blocks
$\Cart|_{V_\rho}\colon V_\rho\to W_\rho$, and by \cref{prop-block-rank}
\[
  \rank(\Cart^h)=\sum_{\rho=0}^{p-3}\min\{N_\rho,N_{p-3-\rho}\}
  =\sum_{\rho=0}^{M-1}N_\rho+\sum_{\rho=0}^{M-2}N_\rho ,
\]
using $p-3=2M-2$. The first sum counts the tuples
$(u_1,\dots,u_h,\sigma)\in\Z_{\ge0}^{h+1}$ with $u_1+\dots+u_h+2\sigma\le M-1$, the
second the same tuples with right-hand side $M-2$. Sending $\sigma\mapsto\epsilon=2\sigma$
in the first set and $\sigma\mapsto\epsilon=2\sigma+1$ in the second gives a bijection
onto the set of $(u_1,\dots,u_h,\epsilon)\in\Z_{\ge0}^{h+1}$ with
$u_1+\dots+u_h+\epsilon\le M-1$. By \cref{lem-stars},
$\rank(\Cart^h)=\binom{M+h}{h+1}$.

Next, $\Img(\Cart^h)\subseteq\bigoplus_{\rho=0}^{p-3}W_\rho$. Let
$\omega=x^{u_1+pu_2+\dots+p^{h-1}u_h}y^\sigma\,dx$ be a basis element of $W_\rho$. In
the Cartier expansion of $\omega$ the residue of the exponent of $x$ is $u_1$, and
the quantity that must be congruent to $p-1$ is $u_1+n+n_0\le u_1+2\sigma\le\rho\le
p-3$. Hence $\Cart(\omega)=0$, and $\Cart^{h+1}=0$.

Finally, if $a_0=0$ then $\Cart^{h-1}\neq0$ (by \cref{thm-higher} for $h\ge2$, and the identity map is nonzero for $h=1$) while $\Cart^h=0$; if $a_0\neq0$ then $\Cart^h\neq0$ and
$\Cart^{h+1}=0$.
\end{proof}

\begin{corollary}\label{cor-h1}
If $h=1$ and $a_0\ne0$, then $\rank(\Cart)=(p^2-1)/8$ and
$a(\Jac(C_R))=(p-1)(3p-1)/8$.
\end{corollary}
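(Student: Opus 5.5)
This is the case $h=1$ of \cref{thm-last}. There $\Cart^h=\Cart$ and $M=(p-1)/2$, so we specialize the formula $\rank(\Cart^h)=\binom{M+h}{h+1}$ to $h=1$ and compute
\[
  \rank(\Cart)=\binom{M+1}{2}=\frac{(M+1)M}{2}=\frac{1}{2}\cdot\frac{p+1}{2}\cdot\frac{p-1}{2}=\frac{p^2-1}{8}.
\]
This is consistent with the count in \cref{fig-h1} for $p=7$: $1+1+2+1+1=6$.

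Next, by \cref{prop-a-number}, the $a$-number is $g-\rank(\Cart)$. The genus is $g(C_R)=p(p-1)/2$, so
\[
  a(\Jac(C_R))=\frac{p(p-1)}{2}-\frac{(p-1)(p+1)}{8}=\frac{(p-1)\bigl(4p-(p+1)\bigr)}{8}=\frac{(p-1)(3p-1)}{8}.
\]

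The only point that needs checking is that the convention $\cU_1^{(0)}=H^0(C_R,\Omg)$ makes \cref{thm-last} apply to $\Cart$ on all of $H^0(C_R,\Omg)$ when $h=1$. This is immediate, since \eqref{eq-last-basis} reduces to $\cB_1$ in that case. There is no real obstacle; the argument is just arithmetic.
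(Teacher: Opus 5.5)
Your proposal is correct and is the same argument as the paper's: specialize \cref{thm-last} to $h=1$ to get $\rank(\Cart)=\binom{M+1}{2}=(p^2-1)/8$, then subtract this from $g=p(p-1)/2$. Your check that \eqref{eq-last-basis} reduces to $\cB_1$ for $h=1$, so that the convention $\cU_1^{(0)}=H^0(C_R,\Omg)$ applies, is also what the paper relies on.
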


\begin{proof}
For $h=1$, \cref{thm-last} gives $\rank(\Cart)=\binom{M+1}{2}=M(M+1)/2=(p^2-1)/8$, and
$g-(p^2-1)/8=(p-1)(3p-1)/8$.
\end{proof}

\begin{remark}[$p$-rank]\label{rem-prank}
Since $\Cart$ is nilpotent, \cref{prop-a-number} shows that $\Jac(C_R)$ has $p$-rank
$0$. This agrees with the Deuring--Shafarevich formula \cite{Subrao1975}, as
$C_R\to\Pone$ is branched at a single point, and with the supersingularity of
$\Jac(C_R)$ \cite{BHMSSV2016}.
\end{remark}

Combining \cref{prop-h0,thm-higher,thm-last} we obtain the complete sequence.

\begin{corollary}\label{cor-sequence}
Let $h\ge1$. Then
\[
  \rank(\Cart^r)=p^{\,h-r-1}\dbinom{p+r}{r+2}\ \ (1\le r\le h-1),\qquad
  \rank(\Cart^h)=\begin{cases}0,& a_0=0,\\[2pt] \dbinom{M+h}{h+1},& a_0\neq0,\end{cases}
\]
and $\Cart^{h+1}=0$. Thus the sequence of Cartier ranks depends only on $p$, $h$ and
on whether $a_0=0$; the coefficients $a_1,\dots,a_{h-1}$ do not affect it.
\end{corollary}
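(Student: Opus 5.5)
This corollary is an assembly of results already proved, so the plan is to collect them range by range for $h\ge1$. For $1\le r\le h-1$, which forces $h\ge2$, \cref{thm-higher} gives $\Img(\Cart^r)=\cU_h^{(r)}$. Its dimension was computed in \cref{lem-Uhr-basis} as $p^{h-r-1}\binom{p+r}{r+2}$. For $r=h$ and $r=h+1$, \cref{thm-last} gives $\Cart^{h+1}=0$ together with the two-case formula for $\rank(\Cart^h)$. In that formula the value $\binom{M+h}{h+1}$ comes from the block-diagonal decomposition into the maps $V_\rho\to W_\rho$ of \cref{prop-block-rank}. Since $\Cart^{h+1}=0$, all higher powers vanish too, so the whole sequence $(\rank\Cart^r)_{r\ge1}$ is determined. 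The case $h=1$ needs no separate argument: the range $1\le r\le h-1$ is then empty, and \cref{thm-last} was proved for $h\ge1$ using the convention $\cU_1^{(0)}=H^0(C_R,\Omg)$. The case $h=0$ lies outside the statement and is covered separately by \cref{prop-h0}.

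For the final assertion, I will check that every displayed value is a function of $p$, $h$, and the alternative $a_0=0$ versus $a_0\neq0$ only. In particular, none involves $a_1,\dots,a_{h-1}$ or the actual value of $a_h\neq0$. This is already visible in the hypotheses used in the proofs. \cref{thm-higher} needed only $a_h\neq0$, through the nonzero diagonal entries $(-1)^{t}\binom{\sigma+t}{\sigma}\alpha_h^{t}$ of the triangular minors. \cref{prop-block-rank} needed only $a_0a_h\neq0$, through the row and column factors that were split off before reducing to a binomial Vandermonde matrix.

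There is no real obstacle, since the corollary restates earlier results. The only point needing care is that the formula $p^{h-r-1}\binom{p+r}{r+2}$ is quoted solely in the range $r\le h-1$, where \cref{lem-Uhr-basis} applies. At $r=h$ the answer genuinely changes form, and the break is explained by \cref{lem-last-setup}(i): every surviving term in the last iterate carries a factor $\alpha_0^{n_0}$ with $n_0\ge1$.
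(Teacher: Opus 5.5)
Your proposal is correct and follows the paper's route: the paper obtains the corollary by combining \cref{thm-higher} (with the dimension count of \cref{lem-Uhr-basis}) for $1\le r\le h-1$ and \cref{thm-last} for $r\ge h$. Your handling of $h=1$ through the convention $\cU_1^{(0)}=H^0(C_R,\Omg)$ is accurate, and so is your remark that the displayed values involve only $p$, $h$ and whether $a_0=0$.
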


\begin{table}[ht]
\caption{Cartier-rank sequences $(\rank\Cart^r)_{r\ge1}$ from \cref{cor-sequence},
listed up to the first zero, and the common $a$-number for $h\ge2$.}
\label{tab-sequences}
\centering
\renewcommand{\arraystretch}{1.2}
\begin{tabular}{@{}c r r l l@{}}
\toprule
$(p,h)$ & $g$ & $a(\Jac(C_R))$ & $a_0=0$ & $a_0\neq0$\\
\midrule
$(3,2)$ & $9$    & $5$   & $(4,0)$             & $(4,1,0)$ \\
$(3,3)$ & $27$   & $15$  & $(12,5,0)$          & $(12,5,1,0)$ \\
$(3,4)$ & $81$   & $45$  & $(36,15,6,0)$       & $(36,15,6,1,0)$ \\
$(3,5)$ & $243$  & $135$ & $(108,45,18,7,0)$   & $(108,45,18,7,1,0)$ \\
$(5,2)$ & $50$   & $30$  & $(20,0)$            & $(20,4,0)$ \\
$(5,3)$ & $250$  & $150$ & $(100,35,0)$        & $(100,35,5,0)$ \\
$(7,2)$ & $147$  & $91$  & $(56,0)$            & $(56,10,0)$ \\
$(7,3)$ & $1029$ & $637$ & $(392,126,0)$       & $(392,126,15,0)$ \\
$(11,2)$ & $605$ & $385$ & $(220,0)$           & $(220,35,0)$ \\
$(13,2)$ & $1014$& $650$ & $(364,0)$           & $(364,56,0)$ \\
\bottomrule
\end{tabular}
\end{table}

\begin{remark}[Computational check]\label{rem-computation}
Independently of the proofs, we computed the Cartier--Manin matrix directly from
\eqref{eq-yj} and \eqref{eq-C2}, without using \cref{prop-expansion}, together with
the ranks of its semilinear powers (\cref{rem-semilinear}), for $236$ curves: $h=0$
with $p\le11$; $h=1$ with $p\le13$; and every $(p,h)$ in \cref{tab-sequences}. The
coefficients were random in $\F_p$, and also in $\F_{p^2}$ when $g\le400$, with both
$a_0=0$ and $a_0\neq0$ and with $a_1,\dots,a_{h-1}$ either random or zero. In every
case the ranks agree with \cref{prop-h0,cor-sequence}.
\end{remark}

\section{Ekedahl--Oort types, extremality and characteristic two}\label{sec-consequences}

\begin{remark}[Ekedahl--Oort types]\label{rem-EO}
Let $h\ge1$. By \cref{thm-last}, $\rank(\Cart^h)$ vanishes on the locus $a_0=0$ and
not on its complement. Since the rank sequence is an invariant of the
Ekedahl--Oort type, no Ekedahl--Oort type occurs on both loci. In particular the
condition $a_0=0$ is intrinsic: if $C_R\cong C_{R'}$ over $k$, then $a_0=0$ if and
only if $a'_0=0$. For $h\ge2$ the two loci have the same $a$-number
(\cref{thm-hge2}), so $\rank(\Cart^h)$ is an invariant beyond the $a$-number that
detects the divisor $\{a_0=0\}$ in the space of coefficients.
\end{remark}

Farnell and Pries \cite{FarnellPries2013} obtained families with constant
Cartier--Manin rank under the condition that $p\equiv1$ modulo the pole orders. For
the family \eqref{eq-family} with $h\ge1$ the pole order is $p^h+1>p$, so that
condition fails; here the constancy of the ranks of $\Cart^r$ for $r\le h-1$ comes
from the image spaces $\cU_h^{(r)}$ of \cref{sec-higher}.

\begin{corollary}[Extremal Cartier ranks]\label{cor-extremal-ranks}
Let $h\ge2$, $d=p^h+1$ and $g=p^h(p-1)/2$. For a polynomial $f\in k[x]$ of degree
$d$ let $\Cart_f$ be the Cartier operator of $y^p-y=f(x)$. Then
\[
  \rank(\Cart_f^r)\ge p^{h-r-1}\binom{p+r}{r+2}\qquad(1\le r\le h-1),
\]
and every member $C_R$ attains these bounds simultaneously. In particular
\[
  \max_{\deg f=d}a(\Jac(C_f))=\frac{p^{h-1}(p-1)(2p-1)}6,
\]
and every $C_R$ attains this maximum. If $a_0=0$, then $C_R$ maximizes
$\dim_k\ker(\Cart_f^r)$ for every $r\ge1$ among all polynomials $f$ of degree $d$:
\[
  \max_{\deg f=d}\dim_k\ker(\Cart_f^r)=
  \begin{cases}
    g-p^{h-r-1}\dbinom{p+r}{r+2},&1\le r\le h-1,\\[3pt]
    g,&r\ge h.
  \end{cases}
\]
\end{corollary}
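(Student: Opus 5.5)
The proof will combine Groen's inequality, in the form recorded in \cref{rem-groen}, with the exact ranks already computed for the family. For any $f$ of degree $d=p^h+1$, Groen's upper bound on $\dim_k\ker\Cart_f^r$ depends only on the ramification data (one point, break $d$). That bound is attained by the monomial curve $y^p-y=x^d$, so
\[
\dim_k\ker(\Cart_f^r)\le\dim_k\ker(\Cart_{x^d}^r)\qquad(r\ge1).
\]
Both curves have the same genus $g=(p-1)(d-1)/2$ by \cref{prop-AS}, so this is equivalent to $\rank(\Cart_f^r)\ge\rank(\Cart_{x^d}^r)$.

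\textbf{Identifying the monomial curve.} By \cref{rem-normalization} the monomial curve is isomorphic over $k$ to a member $C_R$ with $R=a_hx^{p^h}$ and $a_0=\dots=a_{h-1}=0$. For this member I will not recount \cref{rem-monomial}; instead I will quote \cref{cor-sequence} directly:
\[
\rank(\Cart^r)=p^{h-r-1}\binom{p+r}{r+2}\quad(1\le r\le h-1),\qquad \rank(\Cart^r)=0\quad(r\ge h),
\]
the second because $a_0=0$. This gives the displayed lower bound for every $f$.

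\textbf{Attainment.} Every $C_R$ attains the bound for $1\le r\le h-1$ simultaneously, by \cref{thm-higher}. Taking $r=1$ shows that $a(\Jac(C_f))=g-\rank(\Cart_f)$ is at most $g-p^{h-1}(p^2-1)/6$, with equality for every $C_R$ by \cref{thm-hge2}. This gives the stated maximum $p^{h-1}(p-1)(2p-1)/6$.

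\textbf{Kernels.} For the case $a_0=0$, I will use $\dim\ker\Cart_f^r=g-\rank\Cart_f^r$ together with the lower bound on the ranks. For $r\le h-1$ the bound is attained by \cref{thm-higher}. For $r\ge h$, \cref{thm-last} gives $\Cart^h=0$, so the kernel is all of $H^0(C_R,\Omg)$, of dimension $g$. Since $g$ is trivially an upper bound on any kernel dimension, this case needs no further input.

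\textbf{Main obstacle.} The delicate point is the first step: justifying that Groen's bound, stated for $\dim\ker\Cart^r$, is sharp at the monomial point, so that $\dim\ker\Cart_f^r\le\dim\ker\Cart_{x^d}^r$. I will rely on \cref{rem-groen}, which already asserts exactly this comparison. The identification of $y^p-y=x^d$ with a member of the family $a_0=\dots=a_{h-1}=0$ is immediate.
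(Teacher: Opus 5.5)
Your proposal is correct and takes essentially the same route as the paper. Groen's bound together with its sharpness at the monomial member (\cref{rem-groen}) gives $\dim_k\ker\Cart_f^r\le\dim_k\ker\Cart_{x^d}^r$. Applying \cref{cor-sequence} to $y^p-y=x^d$ and to $C_R$, together with $\Cart^h=0$ when $a_0=0$, then yields the bounds, their simultaneous attainment and the kernel maxima. One small simplification: $y^p-y=x^d$ is literally the member with $R=x^{p^h}$, so you do not need to appeal to \cref{rem-normalization}.
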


\begin{proof}
By Groen's explicit upper bound \cite[Theorem~6.16]{Groen2024}, specialized to a
one-point cover of $\Pone$ with ramification break $d$, and the sharpness at the
monomial member explained in \cref{rem-groen},
$\dim_k\ker(\Cart_f^r)\le\dim_k\ker(\Cart_{x^d}^r)$ for all $r\ge1$. All these curves have genus $g$. Applying \cref{cor-sequence} to the
monomial member $y^p-y=x^d$ and to $C_R$ gives the inequalities and the equality
for $1\le r\le h-1$. If $a_0=0$, then $\Cart^h=0$ on $H^0(C_R,\Omg)$, so the kernel
of $\Cart^r$ is everything for $r\ge h$.
\end{proof}

\begin{remark}[Comparison with the generic $a$-number]\label{rem-generic-a}
Cais and Ulmer \cite[Theorem~1.3]{CaisUlmer2026} determine the $a$-number of
$y^p-y=f(x)$ for $f$ in a nonempty Zariski open subset of the space of polynomials
of degree $d$, $p\nmid d$; it equals the Booher--Cais lower bound. For
$d=p^h+1$ with $h\ge2$, put $P=p^{h-2}$ and $K_\ell=(p^2+1-2\ell p)/2$ for
$1\le\ell\le M$. Then $d=p^2P+1$ and $0<K_\ell<p^2$, so their formula becomes
\begin{align*}
  a_{\mathrm{gen}}
  &=\sum_{\ell=1}^{M}
    \left(\left\lfloor\frac{(p-\ell)d}{p}\right\rfloor
    -\left\lfloor\frac{K_\ell d}{p^2}\right\rfloor\right)
  =P\sum_{\ell=1}^{M}\bigl(p(p-\ell)-K_\ell\bigr)
  =\frac{p^{h-2}(p-1)^2(p+1)}4.
\end{align*}
By \cref{thm-hge2}, every member of the family satisfies
\[
  a(\Jac(C_R))-a_{\mathrm{gen}}=\frac{p^{h-2}(p-1)(p^2-2p+3)}{12}>0.
\]
For $(p,h)=(3,2)$, $(3,3)$, $(5,2)$ and $(7,2)$ we also checked numerically that
the minimum of $a(\Jac(C_f))$ over random polynomials $f$ of degree $d$ with
coefficients in $\F_{p^3}$ or $\F_{p^2}$ equals $a_{\mathrm{gen}}=4,12,24,72$.
\end{remark}

\begin{remark}[Outside the additive family]\label{rem-special}
Let $p=5$, $h=2$ and $f_0=x^2+2x^6+x^{26}$, which is $xR(x)$ for
$R=x+2x^5+x^{25}$. Computing the Cartier--Manin matrix as in
\cref{rem-computation}, with coefficients in $\F_5$, gives
\[
\begin{array}{c|cc}
 f & a(\Jac(C_f)) & (\rank\Cart_f^r)_{r\ge1}\\ \hline
 f_0 & 30 & (20,4,0)\\
 f_0+x^{13} & 26 & (24,10,2,0)\\
 f_0+x^3 & 30 & (20,6,1,0)
\end{array}
\]
whereas the generic value is $a_{\mathrm{gen}}=24$ (\cref{rem-generic-a}). Thus the
$a$-number varies among curves with the same pole order, and the maximal $a$-number
does not characterize the family: the curve $y^5-y=f_0+x^3$ attains it, but its
rank sequence is neither $(20,0)$ nor $(20,4,0)$, so by \cref{cor-sequence} it is not
isomorphic over $k$ to any $C_R$. By \cref{rem-normalization}, adding an additive
polynomial to $f_0$ does not change the isomorphism class; for instance
$f_0+x^5$ and $f_0+2x+3x^{25}$ again give $(20,4,0)$.

The rank sequence itself is not specific to the family either. For example,
$y^3-y=x^{28}+x^{19}$ has rank sequence $(12,5,1,0)$, which is the sequence of the
members with $(p,h)=(3,3)$ and $a_0\neq0$, and $y^3-y=x^{82}+x^{13}$ has
$(36,15,6,0)$, which is that of the members with $(p,h)=(3,4)$ and $a_0=0$. We do
not know whether these two curves are isomorphic over $k$ to members of the family.
\end{remark}

\begin{remark}[Characteristic two]\label{rem-char2}
The hypothesis $p>2$ is essential for the dichotomy of \cref{thm-B}. If $p=2$ and
$h\ge1$, then $a_0x^2\equiv a_0^{1/2}x$ modulo $w^2-w$ with $w=a_0^{1/2}x$, and
the resulting linear term can be removed by a translation $x=X+\lambda$ as in
\cref{rem-normalization}. Hence in characteristic $2$ every $C_R$ is isomorphic to a
member with $a_0=0$. Moreover $C_R$ is then a double cover of $\Pone$ branched at a
single point with ramification invariant $2^h+1$, and by Elkin and Pries
\cite{ElkinPries2013} the $2$-torsion group scheme of the Jacobian of a
hyperelliptic curve in characteristic $2$ depends only on the ramification invariants
at the branch points. So for $p=2$ and fixed $h\ge2$ the Ekedahl--Oort type is
constant on the whole family.
\end{remark}

\begin{question}\label{q-EO}
Let $p$ be odd and $h\ge1$. Determine the Ekedahl--Oort type of $\Jac(C_R)[p]$.
After normalizing $a_h=1$, is it constant on each of the two loci $a_0=0$ and
$a_0\ne0$ as the remaining coefficients vary? Which polynomials $f$ of degree
$p^h+1$ give the same Cartier-rank sequence, or the same Ekedahl--Oort type, as the
family?
\end{question}

For $h=1$ and $a_0=0$ the curve is Hermitian, and its Ekedahl--Oort type is known
\cite{PriesWeir2015}. Any variation within either locus is invisible to the ranks of
the powers of $\Cart$.

\section{Curves over finite fields: proof of \texorpdfstring{\cref{thm-C}}{Theorem C}}\label{sec-finite}

The link between the Cartier operator and maximality is the following criterion of
Garcia and Tafazolian.

\begin{theorem}[{\cite[Theorem~3.3]{GarciaTafazolian2008}; see also \cite{MontanucciSpeziali2018}}]\label{thm-GT}
Let $q=p^n$ and let $X$ be a curve over $\F_{q^2}$. If $X$ is $\F_{q^2}$-maximal or
$\F_{q^2}$-minimal, then $\Cart^n=0$ on $H^0(X\otimes\Fbar,\Omg)$.
\end{theorem}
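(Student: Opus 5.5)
The plan is to pass from the point count to an identity in the crystalline (equivalently, Dieudonné) cohomology of $X$, and then reduce that identity modulo $p$. Put $q=p^n$, $\ell=q^2=p^{2n}$, and let $\pi$ be the $\F_\ell$-Frobenius endomorphism of $J=\Jac(X)$. I will use the Weil conjectures and Tate's theorem. If $X$ is $\F_\ell$-maximal, all $2g$ eigenvalues of $\pi$ on $T_l J$ equal $-q$. Since $\pi$ acts semisimply (Tate), this gives the identity $\pi=-q$ in $\operatorname{End}(J)$. Likewise $\pi=+q$ in the minimal case. So in both cases $\pi=\varepsilon q$ with $\varepsilon=\pm1$. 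This step is standard, and I would only cite it.

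Next I transport this identity to $H=H^1_{\mathrm{cris}}(X/W(\F_\ell))$. This is a free $W(\F_\ell)$-module of rank $2g$. It carries a $\sigma$-semilinear Frobenius $F$ and a $\sigma^{-1}$-semilinear Verschiebung $V$, with $FV=VF=p$. Its reduction modulo $p$ is $H^1_{\mathrm{dR}}(X/\F_\ell)$. On this reduction:
\begin{itemize}
\item $\ker F$ is the Hodge piece $H^0(X,\Omg)$;
\item $V$ restricted to $H^0(X,\Omg)$ is the Cartier operator $\Cart$.
\end{itemize}
Because $\sigma^{2n}$ is the identity on $W(\F_\ell)$, the linear map $F^{2n}$ is the action of $\pi$. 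Hence
\[
F^{2n}=\varepsilon q=\varepsilon F^nV^n .
\]
Here $F^n$ and $V^n$ are both semilinear with respect to $\sigma^{n}=\sigma^{-n}$ on $W(\F_\ell)$, so the two sides have the same type of semilinearity.

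The key step is to cancel $F^n$ on the left. $H$ is torsion-free, and $F$ becomes bijective after inverting $p$, so $F^n$ is injective on $H$. Cancelling gives
\[
F^n=\varepsilon V^n \quad\text{on } H .
\]
Reducing modulo $p$ gives the same identity on $H^1_{\mathrm{dR}}$. Now take $\omega\in H^0(X,\Omg)$. Then $F\omega=0$, so $V^n\omega=0$. Since $V$ preserves $H^0(\Omg)$ and acts there as $\Cart$, this says $\Cart^n\omega=0$. Finally, $\Cart^n=0$ is preserved by base change to $\Fbar$, by \cref{rem-semilinear}.

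The main obstacle is getting the dictionary right. Three points need care:
\begin{itemize}
\item the compatibility of $\pi$ with $F^{2n}$, with the correct semilinearity twists;
\item the identification of the Cartier operator with $V$ on the Hodge filtration, rather than with a dual of $F$ on $H^1(X,\mathcal O)$;
\item the fact that the cancellation must be carried out integrally, before reducing modulo $p$.
\end{itemize}
The last point matters because reducing $\pi=\varepsilon q$ directly modulo $p$ only yields $\Cart^{2n}=0$.

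If the crystalline route is too heavy, I would use a fallback with the same skeleton on the Dieudonné module of $J[p^\infty]$. There, $\pi=\varepsilon q$ becomes $F^{2n}=\varepsilon F^nV^n$, $F$ is injective on the full module, and $\Cart$ is identified with $V$ on $V\mathbb D(J[p])$.
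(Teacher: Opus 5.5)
The paper does not prove this statement; it only quotes it from Garcia--Tafazolian. Your argument is correct. It is the usual proof of this criterion, recast in crystalline terms instead of via Frobenius and Verschiebung isogenies on $J=\Jac(X)$.

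\emph{Relation to the isogeny proof.} In the isogeny version one writes $\pi=\varepsilon[q]=\varepsilon V^nF^n$ as maps $J\to J$. One then cancels the surjective isogeny $F^n\colon J\to J^{(p^n)}$ on the right, obtaining $F^n=\varepsilon V^n\colon J^{(p^n)}\to J$. Finally one uses that $F^*$ kills invariant differentials while $V^*$ induces $\Cart$ on $H^0(J,\Omg)\cong H^0(X,\Omg)$. Your left cancellation of the injective map $F^n$ on the torsion-free module $H$ is the contravariant form of that step. Your observation that the cancellation must happen integrally, before reducing modulo $p$, is exactly the essential point.

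\emph{What your route buys.} The semilinearity bookkeeping becomes transparent, since $\sigma^n=\sigma^{-n}$ on $W(\F_{q^2})$. The sign $\varepsilon$ is carried uniformly, so the minimal case used in \cref{thm-C} comes at no extra cost.

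\emph{What it costs.} It relies on several comparison results that you should cite explicitly rather than treat as folklore:
\begin{enumerate}[label=\textup{(\roman*)}, leftmargin=2em]
\item $F^{2n}=\pi^*$ on $H^1_{\mathrm{cris}}$, together with $H^1_{\mathrm{cris}}(X)\cong H^1_{\mathrm{cris}}(J)$, so that $\pi=\varepsilon q$ in $\operatorname{End}(J)$ really gives $F^{2n}=\varepsilon q$ on $H$;
\item torsion-freeness of crystalline cohomology of curves, which gives $H/pH\cong H^1_{\mathrm{dR}}(X/\F_{q^2})$;
\item Oda's identification of $V$ modulo $p$ with $\Cart$ on $H^0(X,\Omg)$.
\end{enumerate}

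One simplification: you only need $H^0(X,\Omg)\subseteq\ker F$ on $H^1_{\mathrm{dR}}$. This is immediate because Frobenius pullback kills $1$-forms, so the equality $\ker F=H^0(X,\Omg)$ is never used.
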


\begin{proof}[Proof of \cref{thm-C}]
Let $X$ be as in \cref{thm-C} and assume that it is $\F_{p^{2n}}$-maximal or
$\F_{p^{2n}}$-minimal. By \cref{thm-GT}, $\Cart^n=0$ on
$H^0(X\otimes\Fbar,\Omg)\cong H^0(C_R,\Omg)$. By \cref{thm-last}, applied over
$k=\Fbar$, the nilpotency index of $\Cart$ on $H^0(C_R,\Omg)$ is $h$ if $a_0=0$ and
$h+1$ if $a_0\neq0$; by \cref{rem-EO} this does not depend on the choice of $R$ with
$X\otimes\Fbar\cong C_R$. Hence $n\ge h$, and $n\ge h+1$ if $a_0\ne0$.
\end{proof}

\begin{remark}[Comparison with known criteria]\label{rem-ITT}
Let $R\in\F_q[x]$ and let $\F_q\subseteq\F_{p^{2f}}$. Ito, Tatematsu and Tsushima
proved that if the curve $y^p-y=xR(x)$ is $\F_{p^{2f}}$-maximal, then $h\le f$
\cite[Proposition~4.13]{ItoTatematsuTsushima2026}; their proof uses the explicit
$L$-polynomial of \cite{TakeuchiTsushima2024}. \Cref{thm-C}, applied with $n=f$,
recovers this inequality; it applies equally to minimal curves and to every
$\F_{p^{2n}}$-form of $C_R$, and it excludes $n=h$ when $a_0\ne0$. For maximal curves of the form \eqref{eq-family}
over $\F_{p^{2h}}$ more is known: such a curve is $\F_{p^{2h}}$-maximal if and only
if $a_0=\dots=a_{h-1}=0$ and $a_h^{p^h}+a_h=0$
\cite{OzbudakSaygi2016}, \cite[Proposition~4.14]{ItoTatematsuTsushima2026}. In
particular the bound $n\ge h$ of \cref{thm-C} is attained for every $p$ and $h$.
\end{remark}

\begin{example}[Sharpness of $n\ge h+1$]\label{ex-sharp}
For a curve $X\colon y^p-y=f(x)$ with $f\in\F_\ell[x]$ one has
$\#X(\F_\ell)=1+p\cdot\#\{x\in\F_\ell:\ \operatorname{Tr}_{\F_\ell/\F_p}(f(x))=0\}$.
Direct enumeration gives the following members with $a_0\neq0$ that are maximal or
minimal over $\F_{p^{2(h+1)}}$.
\begin{enumerate}[label=\textup{(\alph*)}, leftmargin=2em]
  \item $(p,h)=(3,1)$. Let $\zeta\in\F_{81}$ be a root of $T^4+2T^3+2\in\F_3[T]$.
    The curve $y^3-y=\zeta x^2+\zeta^7x^4$ has $136=81+1+2\cdot3\cdot9$ points over
    $\F_{81}$, so it is $\F_{81}$-maximal.
  \item $(p,h)=(3,2)$. The curve $y^3-y=x^2-x^{10}$ has $1216=729+1+2\cdot9\cdot27$
    points over $\F_{3^6}$, so it is $\F_{3^6}$-maximal.
  \item $(p,h)=(5,2)$. The curve $y^5-y=x^2+2x^{26}$ has
    $3126=5^6+1-2\cdot50\cdot125$ points over $\F_{5^6}$, so it is
    $\F_{5^6}$-minimal.
\end{enumerate}
By \cref{thm-C}, none of these curves is maximal or minimal over a smaller field
$\F_{p^{2n}}$ with $n\le h$. As a consistency check, we found by exhaustive
enumeration that none of the $64$ curves $y^3-y=a_0x^2+a_1x^4$ with
$a_0,a_1\in\F_9^\times$ is $\F_9$-maximal or $\F_9$-minimal, while among the $6400$
such curves with $a_0,a_1\in\F_{81}^\times$ exactly $160$ are $\F_{81}$-maximal and
$80$ are $\F_{81}$-minimal.
\end{example}

\begin{question}\label{q-sharp}
For which $(p,h)$ is the bound $n\ge h+1$ of \cref{thm-C} attained when $a_0\ne0$?
For $(p,h)=(3,3)$, none of the members with coefficients in $\F_3$ and $a_0\neq0$ is
maximal or minimal over $\F_{3^8}$.
\end{question}

\end{document}